\theoremstyle{plain}
\newtheorem{theorem}{Theorem} 
\newtheorem{corollary}{Corollary}
\newtheorem{lemma}{Lemma}
\newcommand{\nwc}{\newcommand}
\nwc{\qref}[1]{(\ref{#1})}
\nwc{\cadlag}{c\`{a}dl\`{a}g}
\nwc{\la}{\label}
\nwc{\nn}{\nonumber}
\nwc{\Z}{\mathbb{Z}}
\nwc{\C}{\mathbb{C}}
\nwc{\T}{\mathbb{T}}
\nwc{\E}{\mathbb{E}}
\nwc{\R}{\mathbb{R}}
\nwc{\N}{\mathbb{N}}
\nwc{\Rn}{\mathbb{R}^n}
\nwc{\PP}{\mathcal{P}}
\nwc{\M}{\mathcal{M}}
\nwc{\Ito}{It\^{o}}
\nwc{\DiffM}{\mathrm{Diff}(M)}
\nwc{\law}{\stackrel{\mathcal{L}}{\rightarrow}}
\nwc{\eqd}{\stackrel{\mathcal{L}}{=}}
\nwc{\vp}{\varphi}
\nwc{\veps}{\varepsilon}
\nwc{\eps}{\veps}
\nwc{\dnto}{\downarrow}
\nwc{\nsup}{^{(n)}}
\nwc{\ksup}{^{(k)}}
\nwc{\jsup}{^{(j)}}
\nwc{\nksup}{^{(n_k)}}
\nwc{\inv}{^{-1}}
\nwc{\argmin}{\mathrm{argmin}}
\nwc{\argmax}{\mathrm{argmax}}
\nwc{\Tr}{\mathrm{Tr}}
\nwc{\Id}{\mathrm{Id}}
\nwc{\Pn}{\mathbb{P}(n)}
\nwc{\PnR}{\mathbb{P}(n;\mathbb{R})}
\nwc{\PnC}{\mathbb{P}(n;\mathbb{C})}
\nwc{\Hn}{\mathbb{H}(n)}
\nwc{\HnR}{\mathbb{H}(n;\mathbb{R})}
\nwc{\HnC}{\mathbb{H}(n;\mathbb{C})}
\nwc{\An}{\mathbb{A}(n)}
\nwc{\AnR}{\mathbb{A}(n;\mathbb{R})}
\nwc{\AnC}{\mathbb{A}(n;\mathbb{C})}
\nwc{\Mn}{\mathbb{M}(n)}
\nwc{\feasible}{\mathcal{F}}
\nwc{\GLn}{GL(n)}
\nwc{\GLnC}{GL(n;\mathbb{C})}
\nwc{\GLnR}{GL(n;\mathbb{R})}
\nwc{\Poincare}{Poincar\'{e}}
\nwc{\Un}{U(n)}
\nwc{\Sn}{\mathbb{S}^{n}}
\nwc{\Tn}{\mathbb{T}^n}
\nwc{\ev}{\mathrm{ev}}
\nwc{\Diff}{\mathrm{Diff}}
\nwc{\Levy}{L\'{e}vy}
\nwc{\HC}{Harish-Chandra}
\nwc{\BW}{Bures-Wasserstein}
\nwc{\CH}{Cartan-Hadamard}
\nwc{\Hess}{\mathrm{Hess}}
\nwc{\Gunther}{\mathrm{G\"{u}nther}}
\theoremstyle{definition}
\newtheorem{conjecture}[theorem]{Conjecture}
\newtheorem{remark}{Remark} 
\theoremstyle{remark}
\numberwithin{equation}{section}
\numberwithin{figure}{section}
\nwc{\Sz}{Sz\'{e}kelyhidi}
\nwc{\DiffMg}{\DiffM}
\begin{document}

\title{Stochastic Nash evolution}

\author{Dominik Inauen}

\address{Institut f\"{u}r mathematik, Universit\"{a}t Leipzig, D-04109, Leipzig, Germany.}
\email{dominik.inauen@math.uni-leipzig.de}
\author{Govind Menon}

\address{Division of Applied Mathematics, Brown University, 182 George St., Providence, RI 02912.}
\email{govind\_menon@brown.edu}

\thanks{This work was supported by the National Science Foundation (DMS 171487 and 2107205), the Simons Foundation (Award 561041) and the Charles Simonyi Foundation. The authors also express their gratitude to the School of Mathematics at the Institute for Advanced Study, Princeton and the Max Planck Institute for Mathematics in the Sciences, Leipzig for partial support during the completion of this work.}


\begin{abstract}
This paper introduces a probabilistic formulation for the isometric embedding of a Riemannian manifold $(M^n,g)$ into Euclidean space $\R^q$. Given $\alpha \in ]\tfrac{1}{2},1]$, we show that a $C^{1,\alpha}$ embedding $u: M \to \R^q$ is isometric if and only if the intrinsic and extrinsic constructions of Brownian motion on $u(M)\subset \R^q$ yield processes with the same law. The equivalence is first established for smooth embeddings; this is followed by a renormalization procedure for $C^{1,\alpha}$ embeddings. In particular, we also construct extrinsic Brownian motion when $g \in C^2$ and $u$ is a $C^{1,\alpha}$ isometric embedding.

This formulation is based on a gedanken experiment that relates the intrinsic and extrinsic constructions of Brownian motion on an embedded manifold to the measurement of geodesic distance by observers in distinct frames of reference. This viewpoint provides a thermodynamic formalism for the isometric embedding problem that is suited to applications in geometric deep learning, stochastic optimization and turbulence.
\end{abstract}

\maketitle

\section{Introduction}
This work presents a probabilistic characterization of  the isometric embedding problem for Riemannian manifolds based on the equivalence between the intrinsic and extrinsic constructions of Brownian motion on the manifold. We introduce these problems, state and prove our results, and then discuss their scientific context.  

\subsection{The isometric embedding problem for Riemannian manifolds}
\label{subsec:bg}
Assume given a smooth $n$-dimensional closed differentiable manifold $M$ equipped with a metric $g$ and let $(\R^q,e)$ denote $q$-dimensional Euclidean space with the identity metric. An immersion $u: M \to \R^q$ is isometric if $u^\sharp e=g$, where $u^\sharp e$ denotes the pullback metric. In coordinates, this is the system of partial differential equations
\begin{equation}
\label{eq:embed1}
\sum_{a=1}^q\partial_{i}u^a \partial_j u^a = g_{ij}.
\end{equation}
Here $a$ indexes coordinates in $\R^q$ and $1\leq i,j \leq n$ index coordinates on $M$. An embedding is an immersion that is one to one. The isometric embedding and immersion problems are closely related: the central difficulty in both problems is the analysis of equation~\eqref{eq:embed1}. The results in this paper have a natural extension to isometric immersions, but we focus on the embedding problem to be concrete. Similarly, we have assumed that $M$ is closed only to focus on the essential ideas.

Nash's work on the isometric embedding problem has determined the development of the subject since the 1950s. In 1954, he proved the existence of surprising $C^1$ isometric embeddings assuming that $g \in C^0$ and that there are no topological obstructions that prevent smooth embeddings of $M$ into $\R^q$ (this is always true when $q\geq 2n$)~\cite{Nash1}.  In 1956, he established the existence of $C^\infty$ isometric embeddings when $g\in C^\infty$ and $q$ is large enough~\cite{Nash2}. These  theorems have played a seminal role in several areas of mathematics as discussed in the surveys~\cite{DeS3,Gromov-AMS,Hamilton-nash-moser,Klainerman-AMS}. 

\subsection{Brownian motion on Riemannian manifolds}
\label{subsec:BM}
Our sources for stochastic analysis are the monographs~\cite{Hsu,Ikeda,Stroock-book}. 

Let $\triangle_g$ denote the Laplace-Beltrami operator on $(M,g)$. Its action on a smooth function $f$ is given in coordinates by
\begin{equation}
\label{eq:laplace}
\triangle_g f = \frac{1}{\sqrt{|g|}} \partial_i \left( \sqrt{|g|} g^{ij} \partial_j f \right),
\end{equation}
where  $|g|$ denotes the determinant of $g_{ij}$ and $g^{ij}$ denotes its inverse. 

Brownian motion on $(M,g)$ is a diffusion on $M$ whose generator is $\tfrac{1}{2}\triangle_g$. We begin by reviewing the classical constructions of Brownian motion using Stratonovich SDE (see for example~\cite[\S 3.2]{Hsu} and~\cite[\S V.4]{Ikeda}). These serve as a foundation for our work in the low regularity regime.

\subsubsection{Intrinsic Brownian motion}
The standard intrinsic construction of Brownian motion on $(M,g)$, introduced by Eells, Elworthy and Malliavin, proceeds as follows. Consider the bundle of orthonormal frames $O(M)$ for $(M,g)$, let $\pi: O(M)\to M$ denote the canonical projection and let $\{H_j\}_{j=1}^n$ denote the fundamental horizontal vector fields. Let $\{W^j\}_{j=1}^n$ denote $n$ independent standard Wiener processes and consider the Stratonovich SDE on $O(M)$
\begin{equation}
\label{eq:intrinsic}
dU_t = \sum_{j=1}^n \left. H_j\right|_{U_t} \circ dW^j_t.
\end{equation}
The generator for the diffusion $U_t$ is one half the Bochner Laplacian on $O(M)$,
\begin{equation}
    \label{eq:bochner}
    \triangle_{O(M)} = \sum_{j=1}^n H_j^2.
\end{equation}
The projection $X_t=\pi(U_t)$ is a Brownian motion on $(M,g)$ starting at $x=\pi(U_0)$.

While the Bochner Laplacian may be written as a sum of squares, there is no such canonical decomposition for the Laplacian $\triangle_g$ on $(M,g)$. Thus, $X_t$ itself does not admit such a natural formulation as an SDE. However, in any coordinate patch, the Stratonovich SDE \eqref{eq:intrinsic} implies the \Ito\/ SDE
\begin{equation}
    \label{eq:intrinsic2}
    dX^i_t = \sigma^i_j(X_t) dW_t^j -b^i \, dt,
\end{equation}
where the drift and covariance of the noise are given by (see~\cite[Ex. 3.3.5]{Hsu})
\begin{equation}
    \label{eq:intrinsic3}
    b^i = \frac{1}{\sqrt{|g|}} \frac{\partial \sqrt{|g|} g^{ij}}{\partial x^j},  \quad \sigma^{i}_k\sigma^{j}_k = g^{ij}.
\end{equation}
We may now use \Ito's formula to see that the generator of $X_t$ is $\tfrac{1}{2}\triangle_g$.

\subsubsection{Extrinsic Brownian motion}
Assume that $u: M\to \R^q$ is a smooth isometric embedding and denote by $\Sigma=u(M)$ the image of $M$ in $\R^q$. We may project Brownian motion on $\R^q$ onto $\Sigma$  as follows. Let $\{B^a\}_{a=1}^q$ denote $q$ independent standard Wiener processes, let $\{P_a\}_{a=1}^q$ denote the orthonormal projection of the standard basis vectors $e_a \in \R^q$ onto the tangent space $T\Sigma$, let $x\in M$, and consider the Stratonovich SDE
\begin{equation}
    \label{e:extrinsicBM}
    dZ_t = \sum_{a=1}^q P_a \circ dB^a_t, \quad Z_0=u(x).
\end{equation}
The solution to this SDE also corresponds to a Brownian motion on $(M,g)$ beginning at the point $x$ (see~\cite[Eq. 3.2.6, p.87]{Hsu}). 

\subsection{Overview of results} 
\label{subsec:main-thm}
Our main insight is that the equivalence of these constructions of Brownian motion characterizes the isometric embedding problem for $(M,g)$. This idea is natural for smooth embeddings. Our main results establish this characterization in the regime when both the metric $g$ and the embedding $u$ have low regularity. We begin with some remarks that explain our viewpoint.

The construction of Brownian motion on $(M,g)$ using equation~\eqref{eq:intrinsic} is appealing because it is an extension of the intrinsic notion of parallel transport to  the stochastic setting. The ordinary differential equations that describe parallel transport have been replaced by Stratonovich SDE in equation~\eqref{eq:intrinsic}. The use of the bundle of frames allows us to think of Brownian motion on $(M,g)$ as the limit of small, independent, isotropic random parallel translations. It does not require embeddability of the manifold and it requires exactly $n$ driving Brownian motions. 

The extrinsic construction reduces Brownian motion on $(M,g)$ to the familiar construction of Brownian motion on $\R^q$. This simplifies many calculations and is amenable to numerical simulations. However, it is (at least at first sight) less satisfactory than the intrinsic construction. The flaws in this approach include the assumption of the existence of a smooth embedding and the fact that the number of driving Brownian motions is $q \geq n +1$. 

The isometric embedding problem itself has been a subject of renewed study in the past ten years following the creation of the embedding-turbulence analogy by De Lellis and \Sz\/ (see~\cite{DeS3} for a survey). An important area of enquiry has been the critical exponent for $C^{1,\alpha}$ embeddings that separate rigidity and flexibility, in analogy with the Onsager conjecture for the Euler equations~\cite{CI,Cao,CDS,DI,DIS,Isett} (see also \cite{CIH, CSMA,Lewicka-Pakzad} for closely related results for the Monge--Amp\`ere equation). The embedding-turbulence analogy, as well as the importance of the isometric embedding problem for several techniques in machine learning, has motivated us to revisit Nash's work using probabilistic techniques in order to align  scientific applications with mathematical foundations. 

Theorem~\ref{t:main}--Theorem~\ref{t:renorm} provide a rigorous foundation for the information theoretic interpretation of isometric embedding introduced in~\cite{GM-gsi}. While the conceptual foundation for the work in this paper is similar, the techniques used here reveal the probabilistic nature of the isometric embedding problem in its simplest form. These theorems should be seen as the first steps towards a rigorous, and minimal, thermodynamic formalism for the isometric embedding problem. They have already been used for the design of algorithms for Gibbs sampling and optimization, for the construction of models in random matrix theory, as well as to shed new light on geometric deep learning. These applications are discussed in  Section~\ref{sec:thermo}.

\section{Statement of results}
\subsection{Overview}
We study carefully the manner in which we may recover geometric information from strictly probabilistic foundations. Our main result, Theorem~\ref{t:renorm} below, provides an equivalence between the isometric embedding problem and the intrinsic and extrinsic constructions of Brownian motion for $C^2$ metrics and $C^{1,\alpha}$ embeddings when $\alpha \in ]\tfrac{1}{2},1]$. It also provides a pathwise construction of extrinsic Brownian motion on a $C^{1,\alpha}$ embedding for $\alpha \in ]\tfrac{1}{2},1]$. In turn, the sample path properties of this extrinsic Brownian motion shed new light on the role of (renormalized) mean curvature for $C^{1,\alpha}$ isometric embeddings.


%

The equivalence is natural in the $C^\infty$ regime (see Theorem~\ref{t:main}). However, it is necessary to combine
techniques from PDE theory (especially elliptic regularity theory and $h$-principles) with stochastic analysis to extend this equivalence to the low regularity regimes. It is  also necessary to construct intrinsic Brownian motion for metrics with low regularity before studying embeddings. We do this in Theorem~\ref{t:intrinsicBMlowreg}, replacing the Eells-Elworthy-Malliavin construction with a martingale problem for $C^2$ metrics. Then for $g\in C^2$, we extend Theorem~\ref{t:main} into the regime of $C^{1,\alpha}$ embeddings, $\alpha \in ]\tfrac{1}{2},1]$, using a natural geometric renormalization procedure. 

The relationship between curvature and Brownian motion plays an important role in stochastic analysis. We develop this idea for $C^{1,\alpha}$ isometric embeddings as follows. We first recall that mean curvature may be understood probabilistically  for $C^\infty$ embeddings (roughly it is is the `constraint force' that keeps Brownian motion on a curved manifold). Conjecture~\ref{conj:smg} (which holds for $C^{1,2/3+}$ convex surfaces) then extends the notion of mean curvature to $C^{1,\alpha}$ embeddings.

Conceptually, these theorems formalize the process of measurement of length by intrinsic and extrinsic observers. We refer to the probability spaces on which the Wiener processes $W_t\in \R^n$ and $B_t \in \R^q$ are defined as the intrinsic and extrinsic probability spaces respectively. We may use the probabilities of bridges connecting points $x$ and $y$ in $M$ to define statistical estimators for the geodesic distance between these points. Thus, the laws of intrinsic and extrinsic Brownian motion provide a model for the measurement of length by intrinsic and extrinsic observers.






\subsection{Smooth embeddings}
\label{sec:statement}
We always assume that $M$ is a closed, connected manifold with a $C^\infty$ differentiable structure. In this section, we also assume that both the metric $g$ and embedding $u$ are $C^\infty$. 

Assume given an embedding $u:M \to \R^q$ (not necessarily isometric).  Fix $x \in M$, let $Y_t$ denote the process $u(X_t)$ obtained from equation~\eqref{eq:intrinsic} and assume $Z_t$ is as above in \eqref{e:extrinsicBM}. Both these processes are curves on $\Sigma$ that originate at $u(x)$.

\begin{theorem}\label{t:main}
Assume $u: M \to \R^q$ is a $C^\infty$ embedding and $g$ is a $C^\infty$ metric. The processes $Y_t$ and $Z_t$ have the same law for each $x \in M$ if and only if the embedding $u$ is isometric.
\end{theorem}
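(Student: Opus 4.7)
The strategy is to transport both processes back to $M$ via $u\inv$, recognize them as Brownian motions with respect to two a priori different metrics on $M$, and then use the fact that the law of Brownian motion determines the metric.

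\emph{Sufficiency.} If $u$ is isometric, then $u:(M,g)\to(\Sigma,e|_\Sigma)$ is a smooth Riemannian isometry (here $u\inv:\Sigma\to M$ is smooth since $u$ is a smooth embedding and $M$ is compact). Brownian motion transports naturally under isometries, so the pushforward $Y_t=u(X_t)$ is a Brownian motion on $(\Sigma,e|_\Sigma)$ starting at $u(x)$. The extrinsic construction recalled in the excerpt identifies $Z_t$ as the same object. Uniqueness of Brownian motion on a smooth closed Riemannian manifold given its starting point then gives $Y_t\eqd Z_t$.

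\emph{Necessity.} Conversely, one applies $u\inv$ to both sides. By construction, the map $u:(M,u^\sharp e)\to(\Sigma,e|_\Sigma)$ is always an isometry, irrespective of whether $u$ is isometric with respect to $g$, so $\widetilde Z_t:=u\inv(Z_t)$ is a Brownian motion on $(M,u^\sharp e)$ starting at $x$, whereas $X_t=u\inv(Y_t)$ is a Brownian motion on $(M,g)$ starting at $x$. Thus, if $Y_t\eqd Z_t$ for every $x$, then the laws of Brownian motion on $(M,g)$ and $(M,u^\sharp e)$ coincide for every initial point, and the task reduces to showing that these laws determine the metric. This I would verify at the level of generators: equality of laws forces (via \Ito's formula applied to localized quadratic test functions, or via short-time heat expansions) the equality of the principal symbols of $\tfrac12\triangle_g$ and $\tfrac12\triangle_{u^\sharp e}$. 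Since the principal symbol of $\tfrac12\triangle_g$ is $\tfrac12 g^{ij}$, one obtains $g^{ij}=(u^\sharp e)^{ij}$ pointwise, and inverting gives $g=u^\sharp e$, i.e.\ $u$ is isometric.

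\emph{The main obstacle.} In the $C^\infty$ setting the only slightly nontrivial step is the passage from equality of laws to equality of the generator symbols, and this is standard for Feller diffusions with smooth coefficients; no essential difficulty arises. The genuine challenge this theorem is preparing us for appears in Theorem~\ref{t:renorm}: at $C^{1,\alpha}$ regularity, the pointwise projector $P^{ab}$ onto $T_y\Sigma$ is ill-behaved, the extrinsic SDE \eqref{e:extrinsicBM} must be replaced by a renormalized limit, and the comparison of laws has to be recast as a comparison of martingale problems, which is where the probabilistic content of the paper truly lies.
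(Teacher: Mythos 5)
Your proposal is correct, and the forward direction is essentially the paper's argument in compressed form: the paper makes ``Brownian motion transports under isometries'' precise by computing the It\^o forms of $f(Y_t)$ and $f(Z_t)$ (Lemmas~\ref{l:SDEs} and~\ref{l:iso}) and makes ``uniqueness of Brownian motion'' precise as uniqueness of the martingale problem for $\tfrac12\Delta_h$ (Lemma~\ref{l:mpuniqueness}, via Schauder theory for the resolvent equation); since the two processes live on different probability spaces, the martingale-problem formulation is exactly the right way to compare their laws, and your sketch fills in correctly. For the converse you take a genuinely different route. The paper also passes to $\bar Z_t=u^{-1}(Z_t)$, a Brownian motion on $(M,u^\sharp e)$, but then compares the two \emph{heat kernels}, applies Varadhan's short-time asymptotics $d^2(x,y)=-2\lim_{t\to 0}t\log k(t,x,y)$ to conclude $d_g=d_{u^\sharp e}$, and recovers the metrics from their distance functions. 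You instead read the metric off the \emph{generator}: equality of laws for every starting point gives equality of semigroups, hence of $\tfrac12\Delta_g$ and $\tfrac12\Delta_{u^\sharp e}$ on $C^\infty(M)$, and testing against localized coordinate quadratics isolates the principal symbol $g^{ij}$. In the smooth setting this is perfectly valid and arguably more elementary --- no heat-kernel asymptotics needed. What the paper's detour through Varadhan's lemma buys is robustness: in Theorem~\ref{t:renorm} the pullback metric $u^\sharp e$ is only $C^{0,\alpha}$, the operator $\Delta_{u^\sharp e}$ is not classically defined and one cannot test against $C^2$ functions to extract a symbol, whereas the distance function still makes sense and Norris's low-regularity Varadhan lemma still applies (with Lemma~\ref{l:distanceiso} replacing the differentiation step). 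You correctly anticipate this in your closing remark, so the only thing to note is that the paper's choice of the heat-kernel argument already in Theorem~\ref{t:main} is deliberate foreshadowing rather than necessity.
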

Theorem~\ref{t:main} provides an equivalence between the isometric embedding problem and the construction of Brownian motion on a manifold $(M,g)$ for $C^\infty$ embeddings. The main ideas are as follows. Of course, we expect to have agreement between the laws of $Y_t$ and $Z_t$ when $u$ is isometric. But this requires a proof since the processes $U_t$ and $Z_t$ are constructed on different probability spaces, their sample paths lie in $O(M)$ and $\Sigma\subset \R^q$ respectively, and the regularity assumptions on $u$ and $g$ play different roles in these constructions. The converse provides a strictly probabilistic formulation of isometric embedding: equality in law of the processes $Y_t$ and $Z_t$ determines isometry of a (sufficiently smooth) embedding.

Once the converse has been conceived, it is easy to prove. If the processes $Y_t$ and $Z_t$ have the same law they yield the same heat kernels $k(t, x,y)$ for pairs of points $x,y\in M$ and $t>0$. This allows us to determine the geodesic distance $d_g(x,y)$ between  pairs of points $x,y \in M$, and thus the metric itself, using  Varadhan's lemma (equation (1.5) in \cite{Varadhan})
\begin{equation}    
\label{eq:varadhan}
d_g^2(x,y) = -2 \lim_{t\to 0} t \log k(t,x,y).
\end{equation}
This argument is robust, but acquires several subtle new features, in the low regularity regimes for $g$ and $u$.

\subsection{Extrinsic Brownian motion and mean curvature}
\label{sec:mc}
 We begin by recalling a classical relationship between extrinsic Brownian motion and mean curvature for $C^\infty$ isometric embeddings. See~\cite[\S 2.6]{Hsu},~\cite{Lewis}, and ~\cite[Thm 4.42]{Stroock-book} for variants of Theorem~\ref{t:curvature} below.


Let $II(z)$ denote the second fundamental form of the embedding for $z\in \Sigma$ and define the mean curvature $H(z)$ as the trace of $II(z)$. The correction term in the \Ito\/-Stratonovich conversion for equation \eqref{e:extrinsicBM} is as follows.
\begin{theorem}[Lewis~\cite{Lewis}]
\label{t:curvature}
Assume $u: M \to \R^q$ is $C^{\infty}$. The \Ito\/ form of the Stratonovich SDE \eqref{e:extrinsicBM} is
\begin{equation}
    \label{eq:curvature}
    dZ_t = \sum_{a=1}^q P_a dB^a_t + \frac{1}{2}H(Z_t) \,dt. 
\end{equation}
\end{theorem}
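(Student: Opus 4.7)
The plan is to apply the standard Stratonovich-to-It\^o conversion directly to the SDE \eqref{e:extrinsicBM} and then identify the resulting drift with the mean curvature vector. First I would extend each $P_a$ to a smooth vector field on a tubular neighborhood of $\Sigma \subset \R^q$; since the correction only involves derivatives along the tangential directions $P_a$, the result is independent of the chosen extension. A routine computation with quadratic covariations then gives
\begin{equation*}
    dZ^i_t = \sum_{a=1}^q P_a^i(Z_t)\, dB^a_t + \frac{1}{2} \sum_{a=1}^q \bigl((DP_a)P_a\bigr)^i(Z_t)\, dt,
\end{equation*}
so it suffices to show that the drift $V(z) := \sum_a (DP_a(z))\,P_a(z)$ equals $H(z)$ for every $z \in \Sigma$.

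Next I would exploit the structure $P_a(z) = P(z)\, e_a$, where $P(z)$ is the orthogonal projection $\R^q \to T_z\Sigma$, to rewrite $V(z) = \sum_a \bigl(DP(z)(P_a(z))\bigr)\, e_a$. To evaluate $V$ at a fixed $z_0 \in \Sigma$, I choose the ambient orthonormal basis $\{e_1,\dots,e_q\}$ so that $e_1,\dots,e_n$ span $T_{z_0}\Sigma$ and $e_{n+1},\dots,e_q$ span its orthogonal complement. Then $P_a(z_0) = e_a$ for $a \leq n$ and $P_a(z_0) = 0$ for $a > n$, so only the first $n$ terms survive:
\begin{equation*}
    V(z_0) = \sum_{a=1}^n \bigl(DP(z_0)(e_a)\bigr)\, e_a.
\end{equation*}

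The last step is to compute $DP$ geometrically. Differentiating $P\tau = \tau$ along $\Sigma$ for any locally defined tangent frame $\tau$, and decomposing $D_v \tau = \nabla^\Sigma_v \tau + II(v,\tau)$ via the Gauss formula, yields the identity $(DP(v))\tau(z_0) = II(v,\tau(z_0))$ for all $v \in T_{z_0}\Sigma$. Setting $\tau(z_0) = e_a$ and $v = e_a$ for $a \leq n$ gives $(DP(e_a))e_a = II(e_a,e_a)$, and summing produces
\begin{equation*}
    V(z_0) = \sum_{a=1}^n II(e_a, e_a) = \Tr II(z_0) = H(z_0),
\end{equation*}
which is the asserted formula.

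The calculation is short, and the step I expect to require most care in exposition is the last one: the It\^o correction is \emph{a priori} an ambient, extension-dependent second-order object, and one has to verify both that it is in fact intrinsic to the embedded submanifold and that it coincides with the extrinsic tensor $H$. The identity $(DP(v))w = II(v,Pw) + A_{P^\perp w}v$ obtained by differentiating $P\tau=\tau$ and $P\nu=0$ is what simultaneously makes the extension disappear and produces the mean curvature.
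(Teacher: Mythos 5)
Your argument is correct; it proves the theorem by a genuinely different (though ultimately equivalent) route from the paper's. The paper does not redo the Stratonovich--It\^o conversion at this point: it has already shown in Lemma~\ref{l:SDEs} that for any $f\in C^\infty(\Sigma)$ the drift of $f(Z_t)$ is $\tfrac12\Delta_h f(Z_t)$ (computed there via a geodesic frame $\{F_i\}$ and the expansion $P_a=\sum_i\langle F_i,\partial_{y^a}\rangle F_i$), so the proof of Theorem~\ref{t:curvature} consists of taking $f$ to be the ambient coordinate functions $y^b$ and verifying the identity $\Delta_\Sigma y = H$, again with a geodesic frame and the Gauss formula. You instead work vectorially and from scratch: you compute the It\^o correction directly as $\tfrac12\sum_a(DP_a)P_a$, reduce it with an adapted basis to $\sum_{a\le n}(D_{e_a}P)e_a$, and identify this with $H$ by differentiating the projection-valued map $z\mapsto P(z)$, where the Gauss formula enters through $(D_vP)w = II(v,Pw)+A_{P^\perp w}v$ and the shape-operator term drops out because $w$ is tangent. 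Both computations bottom out in $\sum_i II(F_i,F_i)=\mathrm{tr}\,II=H$, but the key lemma is different: $\Delta_\Sigma y = H$ for the paper versus the derivative-of-the-projection identity for you. The paper's route buys economy, since Lemma~\ref{l:SDEs} is needed anyway for Theorem~\ref{t:main} and the identity $\Delta_\Sigma y = H$ is standard in the mean curvature flow literature; your route buys a self-contained proof that also makes explicit the points the paper leaves implicit, namely that the It\^o correction depends only on tangential derivatives of $P_a$ along $\Sigma$ and is therefore independent of the tubular-neighborhood extension used to make sense of the SDE in $\R^q$.
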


Our main interest lies in the extension of Theorem~\ref{t:curvature} to $C^{1,\alpha}$ embeddings. In this setting, the mean curvature is {\em not\/} defined. However, the existence of extrinsic Brownian motion suggests a replacement of the stochastic differential $\frac{1}{2}H(Z_t) \,dt$ by a geometric analog of the local time (see Conjecture~\ref{conj:smg} below). 

This version of Theorem~\ref{t:curvature} has proved to be of value in the development of algorithms, matrix models and geometric stochastic flows. Typically, in each of these applications we compute the mean curvature of a group orbit explicitly and use it as the basis for a random matrix model or numerical scheme~\cite{HIM23,MY1,MY2,Yu1}. For these reasons, we include a proof of Theorem~\ref{t:curvature} in this paper.



\subsection{Background on critical exponents in the $C^{1,\alpha}$ regime}
\label{subsec:background}
We briefly review the main ideas in Nash's 1954 proof for the existence of $C^1$ embeddings and the manner in which these ideas persist in the embedding-turbulence analogy and related critical exponent problems.

Let us simplify matters by assuming that the topology of $M$ is such that it admits a $C^\infty$ embedding into $q=n+2$. By rescaling, we may then assume that $M$ admits a $C^\infty$ {\em short\/} embedding $u_0$. This is a {\em subsolution\/} to equation~\eqref{eq:embed1}, i.e $u_0^\sharp e < g$ in the order on symmetric quadratic forms. Nash established the existence of $C^1$ isometric embeddings by constructing a sequence $\{u_k\}_{k=1}^\infty$ of $C^\infty$ subsolutions such that the pullback metrics increase monotonically to $g$,
\[ u_0^\sharp e < \ldots u_k^\sharp e < u^\sharp_{k+1}e < \ldots < g. \]
This family is constructed through a careful addition of small, high-frequency oscillations. A fundamental step is the {\em Nash lemma\/}, decomposing a $C^\infty$ metric $h$ into a finite `sum of squares' of the form
\[ h = \sum_{l=1}^N a_l^2 \, d\psi_l \otimes d\psi_l.\]
Here $a_l$ and $\psi_l$ are $C^\infty$ functions on $M$. The integer $N$ and the set $\{\psi_l\}_{l=1}^N$ are determined by a simplicial decomposition of the manifold. Given $h$, the Nash lemma provides the coefficients $\{a_l\}_{l=1}^N$~\cite[Lemma 1]{Nash1}. 

This lemma is used in the following way. Nash's iteration consists of an outer loop (`stages') indexed by $k =0,1,2,\ldots$ and an inner loop (`steps') indexed by $l=1,\ldots, N$. At the $k$-th stage, the residual $h=g-u_k^\sharp e$ is decomposed using the Nash lemma. Then $u_k$ is modified in $N$ steps, with the $l$-th step correcting the pullback metric by (approximately) $a_l^2 d\psi_l \otimes d\psi_l$ through the addition of small, high-frequency, normal and binormal fluctuations. 

The embedding-turbulence analogy relies on an analogous, and rather subtle, notion of subsolutions for the Euler equations which involves replacing the metric $g$ with the Reynolds stress~\cite{DeS2}. This is followed by the construction of a sequence of subsolutions that converge `upwards' to a weak solution to the Euler equations.  H\"{o}lder regularity is obtained by interweaving the addition of oscillations with a smoothing step. The estimates are challenging and the resolution of the Onsager conjecture by Isett~\cite{Isett} builds on several patient improvements on the first such scheme in~\cite{DeS2}. The improvements that provide 
H\"{o}lder regularity for both the Euler equations and isometric embedding problem rely on commutator estimates and smoothing~\cite{CI,Cao,CDS,DI,DIS}. Other $h$-principles that follow~\cite{DeS2} rely on similar decompositions; a key step in the construction is an analog of the Nash lemma.

Theorem~\ref{t:main} allows a different approach to these questions. By viewing the equivalence of two constructions of Brownian motion as the main characteristic of isometric embedding, we seek a critical exponent $\alpha_*$ such that (a modification of) Theorem~\ref{t:main} holds for $C^{1,\alpha}$ embeddings with $\alpha>\alpha_*$. In the spirit of Stroock and Varadhan's introduction of the martingale problem for diffusions in $\R^d$~\cite{StroockVaradhan}, we also ask if Theorem~\ref{t:main} may be used to define a martingale characterization of the isometric embedding problem. This requires an extension of both theorems into the low regularity regime for $u$ and $g$ and a switch from the use of stochastic differential equations to martingale problems. Let us now consider the regularity assumptions that underlie Theorem~\ref{t:main} and Theorem~\ref{t:curvature} more carefully.

\subsection{Construction of intrinsic Brownian motion for $g\in C^2$} 
The construction of the process $U_t$ using the SDE~\eqref{eq:intrinsic} relies only on the smoothness of the differential structure of $M$ (which we have assumed $C^\infty$), as well as the smoothness of the metric $g$. The standard monographs in the area assume $g\in C^\infty$ for simplicity, although a more careful argument shows that $g \in C^3$ suffices. In Theorem~\ref{t:intrinsicBMlowreg} below, we construct intrinsic Brownian motion for $g\in C^2$ as the solution to a martingale problem. Let us explain the issue.

When $g \in C^2$, equation \eqref{eq:intrinsic} is well defined in every coordinate chart, but the coefficients in the equation are not regular enough to guarantee strong uniqueness to the corresponding It\^o SDE in a given chart. This prevents us from constructing a solution to \eqref{eq:intrinsic} by patching together the strong solutions on different coordinate charts (see e.g. the proof of Theorem V.1.1. in \cite{Ikeda}). 

Instead, we construct intrinsic Brownian motion on $(M,g)$ by approximation. In what follows, we consider a sequence of $C^\infty$ Riemannian metrics denoted $g_k$. For any $x\in M$, let $X^{k,x}$ be the intrinsic Brownian motion on $(M,g_k)$ starting at $x$ (see subsection \ref{subsec:BM}). For any $k\geq 1$ we consider the law of the process $X^{k,x}$ as a probability measure $P^{k,x}$ on the path space $C([0,+\infty[, M)$. We then have 

\begin{theorem}\label{t:intrinsicBMlowreg} 
Assume $g$ is a $C^2$ metric and assume that $\{g_k\}_{k=1}^\infty$ is a sequence of $C^\infty$ metrics such that $\lim_{k\to \infty} \|g_k -g\|_{C^1}=0$. 
The sequence $\{P^{k,x}\}_{k\geq 1}$ converges weakly to a solution $P^{x}$ of the martingale problem for $\frac{1}{2}\Delta_{g}$ starting at $x$. Further, this solution to the martingale problem is unique.
\end{theorem}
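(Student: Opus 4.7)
The argument follows the classical Stroock-Varadhan template: (i) establish tightness of $\{P^{k,x}\}_{k\ge 1}$ on the path space $C([0,\infty), M)$; (ii) identify every subsequential weak limit as a solution of the martingale problem for $\tfrac{1}{2}\Delta_g$ starting at $x$; and (iii) prove uniqueness of this martingale problem, which forces the whole sequence to converge to a single limit $P^x$.

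For tightness, cover the compact manifold $M$ by finitely many coordinate charts. In each chart the process $X^{k,x}$ satisfies the \Ito\/ SDE \eqref{eq:intrinsic2} with coefficients built from $g_k$ via \eqref{eq:intrinsic3}. Because $g_k \to g$ in $C^1$ and $g$ is a nondegenerate $C^2$ metric, the drift $b_k^i$ and the dispersion $\sigma_k$ are uniformly bounded in $k$ on each chart. Burkholder-Davis-Gundy estimates on the martingale part, together with direct bounds on the drift, yield $\E\left[|X^{k,x}_{t+h} - X^{k,x}_{t}|^4\right] \leq C h^2$ uniformly in $k$, modulo stopping at the first chart-exit time; partitioning trajectories along a finite chart cover extends the estimate globally. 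Kolmogorov's criterion then delivers tightness in $C([0,\infty),M)$.

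For identification, \Ito's formula applied to \eqref{eq:intrinsic2} shows that for every $f \in C^2(M)$ the process $M^{f,k}_t := f(X^{k,x}_t) - f(x) - \tfrac{1}{2}\int_0^t \Delta_{g_k} f(X^{k,x}_s)\,ds$ is a bounded $P^{k,x}$-martingale. Expanding \eqref{eq:laplace} in non-divergence form, one sees that $\Delta_g f$ acting on $f\in C^2$ depends only on $g$ and its first partial derivatives, so $\|g_k-g\|_{C^1}\to 0$ gives $\Delta_{g_k} f \to \Delta_g f$ uniformly on $M$. Combining weak convergence $P^{k_j,x}\Rightarrow P^x$ along a subsequence with bounded convergence applied to cylindrical functionals implementing the martingale identity, we deduce that $M^f_t := f(X_t) - f(x) - \tfrac{1}{2}\int_0^t \Delta_g f(X_s)\,ds$ is a $P^x$-martingale. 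Hence $P^x$ solves the martingale problem for $\tfrac{1}{2}\Delta_g$ with $X_0 = x$.

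The main obstacle is uniqueness of the martingale problem when $g$ is only $C^2$. In a local chart $\tfrac{1}{2}\Delta_g$ has the non-divergence form $\tfrac{1}{2} g^{ij}\partial_i\partial_j + \tilde{b}^i\partial_i$ with $g^{ij}\in C^2$ uniformly elliptic and $\tilde{b}^i \in C^1$. Because the positive symmetric square root is smooth on the cone of positive-definite matrices, a square root $\sigma$ of $(g^{ij})$ is $C^2$ and hence Lipschitz on each chart. The associated \Ito\/ SDE therefore has Lipschitz drift and dispersion, so classical theory yields pathwise uniqueness---and hence uniqueness in law---for the problem stopped at the chart boundary. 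Patching local uniqueness along chart-exit times via the strong Markov property and regular conditional probabilities, in the spirit of \cite{StroockVaradhan}, produces global uniqueness on $M$. An equivalent route is the PDE duality argument: the parabolic Cauchy problem $\partial_t u = \tfrac{1}{2}\Delta_g u$, $u(0,\cdot) = f \in C^\infty(M)$, is classically well posed since $\Delta_g$ has continuous coefficients in non-divergence form; under any solution $P^x$ one finds $u(t,x)=\E^{P^x}[f(X_t)]$, which pins down the one-dimensional marginals, and iteration using regular conditional distributions determines all finite-dimensional marginals. Uniqueness then upgrades the subsequential convergence of step (ii) to convergence of the full sequence.
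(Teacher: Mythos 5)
Your three-step architecture (tightness, identification of subsequential limits, uniqueness of the martingale problem) is exactly the paper's, and your identification step coincides with Lemma \ref{l:convergenceoflaw}; the two technical components, however, are executed differently. For tightness the paper does not use Kolmogorov's moment criterion: it proves a uniform exit-time estimate $\sup_{x}\mathbb{P}(\tau^{k,x}_\varepsilon \le t)\le C_\varepsilon t$ by applying the martingale property to a cutoff of the distance function (Lemma \ref{l:exittimes}) and combines it with the Markov property to control the modulus of continuity. Your BDG route is viable, but the phrase ``partitioning trajectories along a finite chart cover'' glosses over the one real issue, namely that increments of a manifold-valued path must be measured in $d_g$; the clean fix is to apply the fourth-moment bound to $F(X^{k}_t)$ for a fixed smooth embedding $F:M\to\R^N$, whose component processes have characteristics bounded uniformly in $k$ by the $C^1$ convergence. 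For uniqueness the paper solves the elliptic resolvent equation $\lambda u-\tfrac12\Delta_h u=\varphi$ and upgrades the weak solution to $C^{2,\alpha}$ by Schauder theory (Lemma \ref{l:mpuniqueness}), following Stroock's resolvent argument; you instead observe that in non-divergence form the chart coefficients are $C^2$ (diffusion) and $C^1$ (drift), hence Lipschitz, so pathwise uniqueness holds locally and localization in the sense of Stroock--Varadhan finishes the job. That observation is correct and arguably more elementary: it exploits the fact that the projected It\^o SDE \eqref{eq:intrinsic2} sees only first derivatives of $g$, whereas the obstruction the paper describes lives on $O(M)$, where the It\^o-form drift involves second derivatives of $g$. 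One small caveat on your alternative duality route: mere continuity of the coefficients would not give classical well-posedness of the non-divergence parabolic Cauchy problem; you need the H\"older continuity that $g\in C^2$ actually provides, which is precisely the hypothesis the paper's Schauder argument uses.
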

As a consequence there exists a probability space $(\Omega,\mathcal F, \mathbb P)$, a filtration $\mathcal F_t$ and an adapted process $\{X_t\}_{t\geq 0}:\Omega \to M$ such that $X_0 =x$ almost surely and for every $f\in C^{2}(M)$ it holds that 
\begin{equation}\label{e:martingaleprop}
f(X_t)- f(x) - \frac{1}{2}\int_0^{t}\Delta_{g}f(X_s)\,ds
\end{equation}
is a martingale with respect to $\mathbb{P}$ and $\mathcal F_t$, i.e. $\{X_t\}_{t\geq 0}$ is intrinsic Brownian motion starting at $x$. Moreover, we find 

\begin{corollary}\label{c:markov1} For any $x\in M$, the process $\{X^x_t\}_{t\geq 0}$ is a homogenous Markov process with transition density given by the heat kernel $k_g$.
\end{corollary}

For the definition of the heat kernel $k_g$ see Subsection~\ref{s:heateq}. The latter corollary is a direct consequence of the Markov property of unique solutions to a martingale problem and the $C^2$ smoothness of the solution of the heat equation on $(M,g)$. 

\begin{remark}
\label{rem:smooth-metric} We remark that the statement of the martingale problem for $\Delta_g $ makes sense if $g\in C^1$. However in the proof of Theorem \ref{t:intrinsicBMlowreg} we need the stronger $g\in C^2$. 
\end{remark}

\subsection{Renormalization and the main theorem} 
The process $Z_t$ is defined using the Stratonovich SDE~\eqref{e:extrinsicBM}. Standard SDE theory shows that $u\in C^3$ is the natural hypothesis for the existence of strong solutions to~\eqref{e:extrinsicBM}. However, the natural minimum regularity threshold for isometric maps is when $u$ is Lipschitz. In particular, the intrinsic process $Y_t=u(X_t)$ has a natural meaning for isometric embeddings when $u$ is Lipschitz and $g \in C^2$. Indeed, the existence of solutions to the martingale problem for $X_t$ requires only that $g\in C^2$, so that $Y_t=u(X_t)$ is defined for every measurable $u$.  

Thus, we have a considerable difference between the smoothness assumptions on $g$ and $u$ required to construct the processes $X_t$, $Y_t$ and $Z_t$. In Theorem~\ref{t:renorm} below, we resolve this mismatch for $C^{1,\alpha}$ embeddings, $\alpha>\tfrac{1}{2}$. 


We are guided by a geometric interpretation of Tanaka's formula. 
The map $x \mapsto |x|$ is an isometric folding of the line onto the half-line. Let $X_t$ be Brownian motion on $\R$, set $Y_t=|X_t|$, and let $L_t$ be the local time of $X_t$ at zero. Then Tanaka's formula tells us that $Y_t$ is a semimartingale and 
\begin{equation}
    \label{e:tanaka}
    dY_t = \mathrm{sgn}(X_t) \, dX_t + dL_t.
\end{equation}
Further, by following the process $(X_t,Y_t)$ on its  graph in $\R^2$, we see that the constraint of mean curvature has been replaced by random normal kicks at $(0,0)$ determined by the local time $L_t$. 

This interpretation of Tanaka's fromula suggests the following renormalization procedure. 
For small $\varepsilon>0$ let $u^\varepsilon: M \to \R^q$ denote a mollified embedding $u^\varepsilon= u* \varphi_\varepsilon$ (see Lemma~\ref{lem:smoothing}). Let $g^\varepsilon$ denote the pullback metric $(u^\varepsilon)^\sharp e$. Fix $x \in M$ and let $X^\varepsilon_t$ denote intrinsic Brownian motion on $(M,g^\varepsilon)$ with $X^\varepsilon_0=x$. Let $Y^\varepsilon_t = u^\varepsilon(X^\varepsilon_t)$ denote its image on $u^\varepsilon(M)$. Let us denote the extrinsic Brownian motion beginning at $u^\varepsilon(x)$ by $Z^\varepsilon_t$. Since $u^\varepsilon$ is a $C^\infty$ embedding and $Y_t^\varepsilon$ is constructed with the pullback metric $(u^\varepsilon)^\sharp e$, Theorem~\ref{t:main} implies that the processes $Y_t^\varepsilon$ and $Z_t^\varepsilon$ have the same law, absolutely continuous with respect to the Wiener measures for $W_t \in \R^n$ and $B_t \in \R^q$ respectively. We then have
\begin{theorem}
\label{t:renorm}
Assume $g \in C^2$ and $u: M \to \R^q$ is a $C^{1,\alpha}$ embedding with $\alpha > \tfrac{1}{2}$. 

\begin{enumerate}
    \item If $u$ is isometric then the processes $Z_t^\varepsilon$ converge in law to a process $Z_t$ defined on the extrinsic probability space and  $Z_t$ has the same law as $Y_t$ for every $x\in M$.
    \item Conversely, if the processes $Z_t^\varepsilon$ converge in law to a process $Z_t$ defined on the extrinsic probability space such that $Z_t$ has the same law as $Y_t$ for every $x\in M$, then $u$ is isometric.
\end{enumerate}
\end{theorem}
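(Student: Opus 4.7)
The plan is to reduce both parts to Theorem~\ref{t:main} via the mollification $u^\varepsilon$. The critical exponent $\alpha > 1/2$ enters through a Constantin--E--Titi commutator estimate for the pullback metric: since $\partial_i u \in C^\alpha$, symmetrizing the double convolution that defines $g^\varepsilon = (u^\varepsilon)^\sharp e$ gives
\begin{equation*}
\|g^\varepsilon - (u^\sharp e)*\varphi_\varepsilon\|_{C^k} \leq C\,\varepsilon^{2\alpha - k}\|u\|_{C^{1,\alpha}}^2 ,
\end{equation*}
and for $\alpha > 1/2$ the $k=1$ bound vanishes. This is the single analytical fact doing most of the work below.

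For part (1), isometry of $u$ gives $g = u^\sharp e \in C^2$, so combining the above with $(u^\sharp e)*\varphi_\varepsilon \to g$ in $C^2$ yields $g^\varepsilon \to g$ in $C^1$. Theorem~\ref{t:intrinsicBMlowreg} then gives $X^\varepsilon_t \law X_t$, where $X_t$ is intrinsic Brownian motion on $(M,g)$. Since $u^\varepsilon \to u$ uniformly on the compact manifold $M$, the continuous mapping theorem gives $Y^\varepsilon_t = u^\varepsilon(X^\varepsilon_t) \law u(X_t) = Y_t$. Theorem~\ref{t:main} applied to each smooth embedding $u^\varepsilon$ of $(M,g^\varepsilon)$ gives $Z^\varepsilon_t \eqd Y^\varepsilon_t$, so $Z^\varepsilon_t \law Z_t \eqd Y_t$.

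For part (2) I run the same chain in reverse. Theorem~\ref{t:main} gives $Z^\varepsilon_t \eqd u^\varepsilon(X^\varepsilon_t)$, while by hypothesis $Z^\varepsilon_t \law Z_t \eqd u(X_t)$. Uniform ellipticity of the metrics $g^\varepsilon$ together with Kolmogorov--Centsov estimates give tightness of $\{X^\varepsilon_t\}$ in $C([0,T];M)$; since $u$ is a homeomorphism onto $\Sigma$, pushforward by $u$ is injective on probability measures on path space, forcing $X^\varepsilon_t \law X_t$. Now for each $f \in C^\infty(M)$, \Ito's formula on $(M,g^\varepsilon)$ decomposes $f(X^\varepsilon_t) = f(x) + M^\varepsilon_t + A^\varepsilon_t$ with continuous martingale part $M^\varepsilon$ satisfying
\begin{equation*}
\langle M^\varepsilon\rangle_t = \int_0^t (g^\varepsilon)^{ij}\,\partial_i f\,\partial_j f (X^\varepsilon_s)\,ds .
\end{equation*}
The crucial point is that this integrand involves $g^\varepsilon$ itself, not its derivatives, so a Skorokhod representation yields almost sure uniform convergence $\langle M^\varepsilon\rangle_t \to \int_0^t (u^\sharp e)^{ij}\,\partial_i f\,\partial_j f (X_s)\,ds$. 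A Burkholder--Davis--Gundy bound gives tightness of $\{M^\varepsilon\}$ in $C([0,T])$, and a stability result for the Doob--Meyer decomposition under uniformly tight semimartingales identifies the limiting martingale part of $f(X_t)$ as having precisely this quadratic variation. On the other hand, $X_t$ is intrinsic Brownian motion on $(M,g)$, whose own \Ito\ decomposition has $\langle M\rangle_t = \int_0^t g^{ij}\partial_i f\,\partial_j f(X_s)\,ds$. Equating the two expressions, differentiating at $t = 0$, and letting the starting point $x$ and the covector $\nabla f(x) \in T_x^\ast M$ vary freely, gives $(u^\sharp e)^{ij}(x) = g^{ij}(x)$ for every $x$, so $u$ is isometric.

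The main obstacle is the last identification. The drift $A^\varepsilon_t = \tfrac{1}{2}\int_0^t \Delta_{g^\varepsilon}f(X^\varepsilon_s)\,ds$ contains $\nabla g^\varepsilon$, which blows up pointwise like $\varepsilon^{\alpha - 1}$, so the Meyer decomposition does not converge pathwise. Cancellation must take place at the $L^2$ level: $A^\varepsilon_t$ is bounded in $L^2$ only because both $f(X^\varepsilon_t)$ and $\langle M^\varepsilon\rangle_t$ are uniformly bounded there, even though the integrand of $A^\varepsilon$ is pointwise singular. This reorganization of a divergent pointwise drift into a well-defined limit at the level of characteristics is the probabilistic shadow of the Tanaka-formula renormalization flagged just above the statement, and is precisely the mechanism by which the threshold $\alpha > 1/2$ governs the problem on both the PDE and probabilistic sides.
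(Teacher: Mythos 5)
Part (1) of your proposal is essentially the paper's argument and is correct: the commutator estimate $\|g-(u^\varepsilon)^\sharp e\|_1\leq C\varepsilon^{2\alpha-1}$ (Lemma~\ref{l:christoffels}), Theorem~\ref{t:intrinsicBMlowreg} for $X^\varepsilon\law X$, a generalized continuous mapping step for $Y^\varepsilon=u^\varepsilon(X^\varepsilon)$, and Theorem~\ref{t:main} to identify $Z^\varepsilon\eqd Y^\varepsilon$.

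For part (2) you take a genuinely different route from the paper, and it has a genuine gap at exactly the point you flag. The paper never touches the Doob--Meyer decomposition of $f(X^\varepsilon_t)$: it writes $\E[f(\bar Z^{\varepsilon,x}_t)]$ as $\int_M k_{h_\varepsilon}(t,x,y)f(y)\,d\mathrm{vol}_{h_\varepsilon}(y)$, uses De Giorgi--Nash--Moser (Lemma~\ref{l:heatconvergence}) to pass to the limit $h_\varepsilon\to u^\sharp e$ in $C^0$ on the \emph{heat kernels}, concludes $k_g\sqrt{|g|}=k_{u^\sharp e}\sqrt{|u^\sharp e|}$, and then applies Norris's low-regularity Varadhan lemma plus Lemma~\ref{l:distanceiso} (which recovers a $C^0$ metric from its distance function by comparing $L_h$-minimizers with $g$-geodesics, avoiding any differentiation of $d$). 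Your route instead tries to identify $g$ with $u^\sharp e$ by matching quadratic variations. The step that fails is ``a stability result for the Doob--Meyer decomposition under uniformly tight semimartingales'': such results (P-UT in the sense of Jacod--Shiryaev, or equivalent) require the finite-variation parts $A^\varepsilon$ to be tight in \emph{variation norm}, and here $\mathrm{Var}(A^\varepsilon)_t=\tfrac12\int_0^t|\Delta_{g^\varepsilon}f(X^\varepsilon_s)|\,ds$ blows up like $\varepsilon^{\alpha-1}$ because $u$ is not assumed isometric, so Lemma~\ref{l:christoffels} is unavailable and $\|g^\varepsilon\|_{C^1}$ is uncontrolled. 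The $L^2$-boundedness of $A^\varepsilon_t$ at fixed $t$ (via $A^\varepsilon_t=f(X^\varepsilon_t)-f(x)-M^\varepsilon_t$) does not rescue this: without variation control, the weak limit of $A^\varepsilon$ may acquire a martingale component $N$, in which case $M^\varepsilon\to M-N$ and $\lim\langle M^\varepsilon\rangle_t=\langle M-N\rangle_t\neq\langle M\rangle_t$. Concretely, convergence in law of $X^\varepsilon$ to $X$ does not imply convergence of the quadratic variations $[f(X^\varepsilon)]\to[f(X)]$, and whether the martingale part of $f(X_t)$ is governed by $g$ or by $u^\sharp e$ is precisely the content of the theorem, so it cannot be read off from the limit of $\langle M^\varepsilon\rangle$ without an additional compensating estimate. (At best, lower semicontinuity of quadratic variation under weak convergence yields a one-sided inequality between $g$ and $u^\sharp e$.) A secondary, repairable issue: your tightness claim for $\{X^\varepsilon\}$ via Kolmogorov--Centsov also implicitly uses drift bounds that are unavailable here; one needs Aronson/Nash-type estimates depending only on ellipticity --- but in fact tightness is not needed at all, since convergence in law of $Z^\varepsilon$ is a hypothesis of part (2).
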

Under the hypothesis of part (1) of the Theorem, the law of the limit $Z_t$ is independent of the mollifier. Thus, while
the SDE~\eqref{e:extrinsicBM} does not naively hold for $C^{1,\alpha}$ embeddings, Theorem~\ref{t:main} provides a renormalized construction of extrinsic Brownian motion.
\begin{corollary}
\label{cor:equivalence}
Assume $g \in C^2$, $\alpha \in ]\tfrac{1}{2},1]$ and $u: M \to \R^q$ is a $C^{1,\alpha}$ isometric embedding. For every $x \in M$ the process $Z_t$ is an extrinsic Brownian motion started at $u(x)$. 
\end{corollary}
\subsection{Remarks on Theorem~\ref{t:renorm}}
The proof of Theorem~\ref{t:renorm} relies on a subtle interplay between analysis and probability. We summarize these issues in the remarks below.
\begin{remark}[Commutator estimates]
The proof of Theorem~\ref{t:renorm} relies on the following structure. Since $Z_t^\varepsilon$ and $Y^\varepsilon_t$ have the same law for $\varepsilon>0$, it is only necessary to show that the law of $Y_t^\varepsilon$ converges to that of $Y_t$. The important observation is that, even though $u$ is only $C^{1,\alpha}$, the pullback metrics $g_\varepsilon$ converge in $C^{1}$ to $g$ when $u$ is isometric (see Lemma \ref{l:christoffels}). This step reduces again to a use of commutator estimates for mollifications, as in~\cite{CET,CDS}. Hence by Theorem \ref{t:intrinsicBMlowreg} the processes $X_t^{\varepsilon}$ converge in law to $X_t$, and since $u_\varepsilon$ converge to $u$ in $C^{1}$, the theorem follows. 
\end{remark}

\begin{remark}[Regularity of $u^\sharp e$ and martingale problems] For $u\in C^{1,\alpha}$ the pullback metric $u^\sharp e$ is a priori only $C^{0,\alpha}$. Consequently, neither $\triangle_{u^\sharp e}$ nor $\triangle_\Sigma$ are defined as classical operators and hence the martingale problem for $\triangle_\Sigma$ or $\triangle_{u^\sharp e}$ can not be defined. In this low regularity setting we adopt the definition that a Brownian motion on a Riemannian manifold with continuous metric is a Markov process with transition density given by the heat kernel (see Subsection \ref{s:Markovproperty}).

\end{remark}

\begin{remark}[Varadhan's lemma in the $C^{1,\alpha}$ regime]
Norris has established Varadhan's lemma for Lipschitz Riemannian manifolds~\cite{Norris},
providing an important step for the converse in Theorem~\ref{t:renorm}. However, we establish equality of metrics, not just equality of geodesic distances. The assumption that $g \in C^2$ is necessary to bootstrap from Norris' theorem to the assertion $g=u^\sharp e$; see Lemma~\ref{l:distanceiso}. Again the issue is that a priori $u^\sharp e$ is only $C^{0,\alpha}$. 
 \end{remark}
 \begin{remark}[Semimartingales and the renormalization of mean curvature]
Theorem~\ref{t:curvature} and Tanaka's formula suggest a renormalization of mean curvature for $C^{1,\alpha}$ embeddings, provided one can establish
\begin{conjecture}
\label{conj:smg}
The processes $Y_t$ and $Z_t$ are $\R^q$-valued semimartingales for $\alpha > \tfrac{1}{2}$.    \end{conjecture}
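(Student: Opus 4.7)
The natural strategy is to pass to the limit in the Doob--Meyer decomposition of the mollified processes. For each $\varepsilon > 0$, the smooth embedding $u^\varepsilon$ is isometric with respect to its own pullback metric $g^\varepsilon = (u^\varepsilon)^\sharp e$, so Theorem~\ref{t:main} gives that $Y_t^\varepsilon = u^\varepsilon(X_t^\varepsilon)$ and $Z_t^\varepsilon$ have the same law and are both $\R^q$-valued semimartingales. \Ito's formula on the intrinsic side, or Theorem~\ref{t:curvature} on the extrinsic side, yields the decomposition
$$Y_t^{\varepsilon,a} = Y_0^{\varepsilon,a} + M_t^{\varepsilon,a} + A_t^{\varepsilon,a}, \qquad A_t^{\varepsilon,a} = \tfrac{1}{2}\int_0^t \Delta_{g^\varepsilon} u^{\varepsilon,a}(X_s^\varepsilon)\,ds,$$
where $M_t^{\varepsilon,a}$ is a local martingale with quadratic variation $\int_0^t P^\varepsilon_{ab}(X_s^\varepsilon)\,ds$ and $P^\varepsilon$ is orthogonal projection onto the tangent space of $u^\varepsilon(M)$. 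Since Theorem~\ref{t:renorm}(1) yields $Z_t^\varepsilon \to Z_t$ and $Y_t^\varepsilon \to Y_t$ in law, it suffices to prove that the semimartingale property is preserved under this passage.

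The martingale part is easy to control. Because $P^\varepsilon$ is a projection, $\sum_a [M^{\varepsilon,a}]_t \leq nt$ uniformly in $\varepsilon$. Since $\nabla u^\varepsilon \to \nabla u$ uniformly and $g^\varepsilon \to g$ in $C^1$ (the latter established in the first remark following Theorem~\ref{t:renorm} via commutator estimates), $M^\varepsilon$ converges weakly to a well-defined candidate local martingale with quadratic variation $\int_0^t P_{ab}(X_s)\,ds$, which makes sense since $P$ depends only on $\nabla u \in C^{0,\alpha}$.

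The entire difficulty concentrates in the drift $A_t^{\varepsilon,a}$, which equals (half) the integrated mean curvature of the smoothed image. Standard commutator estimates give only $\|\nabla^2 u^\varepsilon\|_\infty = O(\varepsilon^{\alpha-1})$, which blows up as $\varepsilon \to 0$. The core analytic step would be to extract cancellations from the approximate isometric constraint $g^{ij} \partial_i u^{\varepsilon,a}\partial_j u^{\varepsilon,b} = \delta^{ab} + o(1)$, coupled with $g \in C^2$, to prove the uniform bound
$$\sup_\varepsilon \, \E \int_0^T \bigl|\Delta_{g^\varepsilon} u^{\varepsilon,a}(X_s^\varepsilon)\bigr|\,ds < \infty.$$
From such an estimate, tightness of the drifts as processes of uniformly bounded variation, combined with a weak-convergence criterion for semimartingales (of Jakubowski--M\'{e}min--Pag\`{e}s or Kurtz--Protter type), would identify the limit $Y_t = Y_0 + M_t + A_t$ as a semimartingale, and by equality in law the same holds for $Z_t$. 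I expect this $L^1$ bound on the renormalized mean curvature to be the main obstacle; it is precisely the analytic content already established for $C^{1,2/3+}$ convex surfaces (where the second fundamental form is a matrix-valued Radon measure), and its extension to general $C^{1,\alpha}$ isometric embeddings with $\alpha > 1/2$ is the key missing analytic step.
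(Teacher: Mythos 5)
The statement you were asked to prove is stated in the paper as a \emph{conjecture}: the paper does not prove it, and neither do you. Your proposal is a proof strategy with one explicitly acknowledged hole, and that hole is precisely the open analytic content of the conjecture. The architecture you set up is sound: mollify $u$, use Theorem~\ref{t:main} and Theorem~\ref{t:curvature} to write the Doob--Meyer decomposition of $Y^\varepsilon$ and $Z^\varepsilon$, observe that the martingale parts have quadratic variation uniformly bounded by $nt$ because the $P^\varepsilon_a$ are projections, and then pass to the limit via a uniform-tightness criterion of Jakubowski--M\'emin--Pag\`es or Kurtz--Protter type (you are right that convergence in law alone does not preserve the semimartingale property, so some such criterion is unavoidable). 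Your identification of the obstruction --- a uniform bound of the form $\sup_\varepsilon \E\int_0^T |\Delta_{g^\varepsilon} u^{\varepsilon}(X^\varepsilon_s)|\,ds < \infty$, i.e.\ an $L^1$ control on the mollified mean curvature seen along the diffusion, which naive mollification estimates ($\|\nabla^2 u^\varepsilon\|_\infty = O(\varepsilon^{\alpha-1})$) do not supply --- is correct, and no such estimate is currently known for general $C^{1,\alpha}$ isometric embeddings with $\alpha>\tfrac{1}{2}$. Without it there is no proof.

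For comparison, the only evidence the paper offers for the conjecture is the case of $C^{1,2/3+}$ convex surfaces in $\R^3$, and there the route is structural rather than renormalization-based: by Aleksandrov such surfaces are delta-convex (representable as differences of convex functions), and the Meyer--It\^o decomposition then shows directly that the composition with a semimartingale is a semimartingale, the relevant second-derivative object being a matrix-valued Radon measure. Your mollification route, if the missing uniform bound could be established, would be more general and would moreover identify the finite-variation part $K_t$ as the renormalized mean curvature of equation~\eqref{eq:smg}; but as it stands the proposal should be read as a reasonable reduction of the conjecture to a hard open estimate, not as a proof.
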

This conjecture provides a rigorous framework within which we may replace the idea of `constraint force determined by mean curvature' with `random kicks in the right direction to maintain a constraint' for $C^{1,\alpha}$ embeddings. Indeed, if $Y_t$ and $Z_t$ are semimartingales, we may write the (conjectural) Doob-Meyer decomposition of $Y_t$ and $Z_t$ as
\begin{equation}
\label{eq:smg}
dY_t = Du(X_t) dX_t + dK_t, \quad dZ_t = \sum_{a=1}^q P_a dW_t^a + dM_t.    
\end{equation} 
When $u$ is smooth we have $dK_t =\tfrac{1}{2}H(Y_t)\, dt$ and $dM_t =\tfrac{1}{2}H(Z_t)\, dt$. Thus, should such a decomposition hold, the processes $K_t$ and $M_t$ renormalize the mean curvature as a geometric analog for $C^{1,\alpha}$ isometric embeddings of the local time. 

It is an interesting fact that the decomposition~\eqref{eq:smg} holds when $u$ is a $C^{1,2/3+}$ embedding of a complete two-dimensional Riemannian manifold with positive Gaussian curvature into $\R^3$. Indeed, by results due to Pogorelov \cite{Pogorelov} and Borisov \cite{Borisov1,Borisov2,Borisov3,Borisov4} (see also \cite{CDS}), the submanifold $u(M)$ is a complete convex surface in $\mathbb R^3$. One can then invoke the Meyer-\Ito\/ decomposition of semimartingales~(see~\cite{Aleksandrov} and~\cite[Thm. 70]{Protter}). 
\end{remark}

\subsection{Organization of the rest of the paper}
 These theorems are proved in the sections that follow. Section~\ref{sec:thermo} has a different character. Here we explain the conceptual foundation for our work. In particular, we explain the manner in which our theorems provide a rigorous Bayesian foundation for the thermodynamics of isometric embedding. This viewpoint ties it to many applications, including geometric deep learning, optimization and turbulence. 


\section{Proof of Theorem~\ref{t:main}}
\label{sec:proofs}
For the proof of the first part of Theorem \ref{t:main} we first establish the It\^o SDE's of the processes $f(Y_t), f(Z_t)$ for general $f\in C^\infty(\Sigma)$ in Lemma \ref{l:SDEs}. If the embedding is isometric, Lemma \ref{l:iso} implies that the drift terms of these processes agree. We then use the uniqueness of solutions to the martingale problem (see \eqref{e:martingaleproperty}) shown in Lemma \ref{l:mpuniqueness} to  deduce that the laws of $Y_t$ and $Z_t$ must agree. 

We prove the converse to Theorem~\ref{t:main} after Lemma~\ref{l:mpuniqueness} is established. We explain the manner in which Varadhan's lemma is used with some care, in order to prepare the reader for the more delicate use of similar ideas for $C^{1,\alpha}$ embeddings and Theorem~\ref{t:renorm}. 

We begin by introducing some notation. For a Riemannian manifold $(N,h)$ and a function $f\in C^{\infty}(N)$ the gradient $\nabla_h f $ is the vector field on $N$ defined through $h(\nabla_h f, X) = X(f)$ for any vector field $X\in \mathfrak{X}(N)$, $|\nabla_h f|_h$ denotes its length with respect to the metric $h$, and $\Delta_h$ denotes the Laplace--Beltrami operator on $(N,h)$. The volume element $\sqrt{\det h}$ is denoted by $\sqrt{|h|}$. In the following, we equip the submanifold $\Sigma=u(M)$ with the induced (pullback) metric $h=\iota^{\sharp}e $, where $\iota:\Sigma\hookrightarrow \R^q$ denotes the inclusion and $e$ the Euclidean metric on $\R^q$. We denote indices for coordinates on $M$ by $i,j,k$, so that $1\leq i \leq n$; we denote indices for coordinates on $\R^q$ by $a,b,c$, so that $1\leq a \leq q$ etc. We denote the coordinates on $\R^q$ by $y^a$ and we typically use the summation convention. 
 
\begin{lemma}\label{l:SDEs} For $f\in C^{\infty}(\Sigma)$ it holds
\begin{align}
  df(Y_t) &= dM^{f}_t + \frac{1}{2}\Delta_g (f\circ u) (X_t)\,dt \label{e:intrinsicSDE}\\
  df(Z_t) &= dN^f_t + \frac{1}{2} \Delta_h f(Z_t) \,dt \,,\label{e:extrinsicSDE}
  \end{align} 
  where $M^f$ and $N^{f}$ are martingales with quadratic variation 
  \begin{align*} 
    \langle M^f\rangle_t &= \int_0^{t}|\nabla_g (f\circ u)(X_s)|_g^{2}\,ds,\\
    \langle N^f\rangle_t &= \int_0^{t} |\nabla_{h}  f(Z_s)|^{2}\,ds \,. \end{align*}  
\end{lemma}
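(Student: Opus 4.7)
The plan is to derive both displays by applying Itô's formula and reading off the martingale and bounded-variation parts separately. In each case the drift is forced by the fact that the underlying diffusion has a known generator ($\tfrac{1}{2}\Delta_g$ for $X_t$, $\tfrac{1}{2}\Delta_h$ for $Z_t$), so the real content is the computation of the quadratic variation of the martingale component.

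For \eqref{e:intrinsicSDE}, I would work in a coordinate chart on $M$ and apply Itô's formula to the smooth function $f\circ u \in C^\infty(M)$ evaluated along the Itô SDE \eqref{eq:intrinsic2}--\eqref{eq:intrinsic3}. The martingale part is $\partial_i(f\circ u)(X_t)\sigma^i_j(X_t)\,dW^j_t$, whose quadratic variation is
\[
\partial_i(f\circ u)\,\partial_k(f\circ u)\,\sigma^i_j\sigma^k_j\,dt \;=\; g^{ik}\,\partial_i(f\circ u)\,\partial_k(f\circ u)\,dt \;=\; |\nabla_g(f\circ u)|_g^2\,dt,
\]
using $\sigma^i_k\sigma^j_k=g^{ij}$. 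The bounded-variation part, $\tfrac{1}{2}g^{ij}\partial_i\partial_j(f\circ u) - b^i\partial_i(f\circ u)$, collapses to $\tfrac{1}{2}\Delta_g(f\circ u)$ on inspection of the formula \eqref{eq:laplace} and the explicit expression for $b^i$ in \eqref{eq:intrinsic3}. Since $\tfrac{1}{2}\Delta_g$ and the gradient norm are coordinate-independent, the identity extends globally.

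For \eqref{e:extrinsicSDE}, I would extend $f$ smoothly to a tubular neighborhood of $\Sigma$ in $\R^q$, call the extension $\tilde f$, and apply Itô's formula to $\tilde f(Z_t)$. The decisive algebraic input is the projection identity
\[
\sum_{a=1}^q (P_a)^b(P_a)^c \;=\; \pi^{bc},
\]
where $\pi(z)$ is orthogonal projection onto $T_z\Sigma$; this is immediate from $P_a=\pi(e_a)$ together with $\pi^T=\pi=\pi^2$. Because the quadratic variation of a continuous semimartingale is the same in its Stratonovich and Itô forms, this gives $d[Z^b,Z^c]_t = \pi^{bc}(Z_t)\,dt$, so the martingale part $N^f$ of $\tilde f(Z_t)$ has quadratic variation
\[
\partial_b\tilde f\,\partial_c\tilde f\,\pi^{bc}(Z_t)\,dt \;=\; |\nabla_h f(Z_t)|_h^2\,dt,
\]
since the tangential projection of $\nabla\tilde f$ along $\Sigma$ is precisely the intrinsic gradient $\nabla_h f$, and $h$ is the induced metric. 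Finally, because $Z_t$ is a Brownian motion on $(\Sigma,h)$ with generator $\tfrac{1}{2}\Delta_h$ (Subsection~\ref{subsec:BM}), the bounded-variation part of $df(Z_t)$ must equal $\tfrac{1}{2}\Delta_h f(Z_t)\,dt$.

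The calculations are otherwise standard Itô calculus. I do not anticipate substantive obstacles: the only slightly nonroutine ingredient is the projection identity $\sum_a P_a\otimes P_a=\pi$, which is the algebraic fact encoding how the $q$ ambient Wiener drivers $\{B^a\}$ synthesize the $n$-dimensional noise intrinsic to $(\Sigma,h)$, and which in turn is what allows the two quadratic variations to be compared in the subsequent proof of Theorem~\ref{t:main}.
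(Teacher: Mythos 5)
Your proposal is correct, but it follows a genuinely different route from the paper in both halves. For \eqref{e:intrinsicSDE} the paper never descends to coordinates: it applies the Stratonovich--It\^o conversion to $f\circ u\circ\pi(U_t)$ on the frame bundle and uses the structural identities $H_i|_r(f\circ\pi)=E_i(f)$ and $\sum_i H_i(H_i(f\circ\pi))=\Delta_g f$, so the quadratic variation $\sum_i(E_{i,s}(f\circ u))^2=|\nabla_g(f\circ u)|_g^2$ falls out from orthonormality of the frame with no patching argument. Your chart-based computation is equivalent and standard, but two things deserve care: (i) the global statement requires a localization (stopping times at chart exits), which you dismiss a bit quickly; and (ii) the coefficients as printed in \eqref{eq:intrinsic2}--\eqref{eq:intrinsic3} do not literally make the drift ``collapse on inspection'' to $\tfrac12\Delta_g(f\circ u)$ --- with drift $-b^i\,dt$ and $b^i=\tfrac{1}{\sqrt{|g|}}\partial_j(\sqrt{|g|}g^{ij})$ one gets $\tfrac12 g^{ij}\partial_i\partial_j - b^i\partial_i$ rather than $\tfrac12 g^{ij}\partial_i\partial_j+\tfrac12 b^i\partial_i=\tfrac12\Delta_g$; the correct It\^o SDE carries $+\tfrac12 b^i\,dt$, so you should either fix the constant or argue directly from the generator.

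For \eqref{e:extrinsicSDE} your quadratic-variation computation via $\sum_a(P_a)^b(P_a)^c=\pi^{bc}$ is exactly the right algebra (the paper does the same thing written in a geodesic frame $\{F_i\}$, as $\sum_a(P_a(f))^2=\sum_iF_i(f)^2$). The one substantive divergence is the drift: you obtain it by \emph{quoting} that $Z_t$ has generator $\tfrac12\Delta_h$, whereas the paper \emph{derives} it by computing the It\^o--Stratonovich correction $\tfrac12\sum_aP_a(P_a(f))$ in a geodesic frame and showing the extra term $\sum_{i,j}\langle F_i,\nabla^\Sigma_{F_i}F_j\rangle F_j(f)$ vanishes at the base point, giving $\Delta_h f$. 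This is not circular on your part (the fact is in Hsu, and the paper itself cites it in \S\ref{subsec:BM}), but it outsources precisely the computation that the paper reuses: the proof of Theorem~\ref{t:curvature} rests on evaluating $\tfrac12\sum_aP_a(P_a(\cdot))$ on the coordinate functions $y^b$. If you want your proof to serve the same downstream role, carry out that correction term explicitly rather than importing the generator.
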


\begin{proof}
We first show \eqref{e:intrinsicSDE}. Recall that the intrinsic Brownian motion on $M$ is constructed using the horizontal Brownian motion $U_t$ on the bundle of orthonormal frames $O(M)$. The horizontal Brownian motion satisfies the SDE 
\begin{equation}\label{e:horizontal}
dU_t = H_i\vert_{U_t} \circ dW^{i}_t
\end{equation}
where the sum runs over $i=1,\ldots,n$, and $H_i$ are the fundamental horizontal vector fields. In particular, they have the property that for $f\in C^{\infty}(M)$ and $r=(x,[E_1,\ldots,E_n])\in O(M)$ it holds  
\begin{equation}\label{e:derivative}H_i|_r(f\circ \pi) = E_i(f)\,,\end{equation}
and moreover, crucially,
\begin{equation}\label{e:laplacian}\sum_{i=1}^{n} H_i\vert_r( H_i(f\circ \pi)) = \Delta_gf(x)\,,\end{equation}
where  $\pi:O(M)\to M$ denotes the projection.
Since by definition $X_t = \pi(U_t)$ and thus, for $f\in C^{\infty}(\Sigma)$, $f(Y_t) = f\circ u\circ \pi (U_t)$ we find by the conversion formula (cf. Theorem 2.3.5 in \cite{Kunita}) 
\begin{align*}
df(Y_t) &= H_i|_{U_t}(f\circ u\circ \pi)\circ dW^{i}_t  = H_i\vert_{U_t}(f\circ u\circ \pi) dW^{i}_t + \frac{1}{2}\langle H_i|_{U_\cdot}(f\circ u\circ \pi), W^{i}\rangle_t dt \\
& = H_i\vert_{U_t}(f\circ u\circ \pi) dW^{i}_t + \frac{1}{2}H_j\vert_{U_t}\left ( H_i (f\circ u\circ \pi)\right )d\langle W^{j}, W^{i}\rangle_t  \\
&= E_{i,t}(f\circ u)\,dW^{i}_t + \frac{1}{2}\Delta_g (f\circ u)(X_t)\,dt
\end{align*} 
thanks to \eqref{e:derivative} and \eqref{e:laplacian}, where we denoted $U_t= (X_t, [E_{1,t},\ldots,E_{n,t}])$. Thus we have \eqref{e:intrinsicSDE} with 
\[ M^f_t = \int_0^{t} E_{i,s}(f\circ u)\,dW^{i}_s\,,\] 
which is a martingale with quadratic variation 
\[ \langle M^f\rangle_t = \sum_{i=1}^{n}\int_0^{t} \left (E_{i,s}(f\circ u)\right )^{2}\,ds = \int_0^{t}|\nabla_g (f\circ u)(X_s)|_g^{2}\,ds\,,\]
since $\{E_{i,s}\}_{i}$ is an orthonormal basis of $T_{X_s}M$.

To show \eqref{e:extrinsicSDE} we apply the conversion formula to \eqref{e:extrinsicBM} and find in the same way 
\begin{equation}\label{e:itotostrat} df(Z_t) = P_a\vert_{Z_t}(f)dB^a_t  + \frac{1}{2}\sum_{a=1}^{q}P_{a}\vert_{Z_t}\left (P_a (f)\right )\,dt\,.\end{equation} 
Now fix a point $p\in\Sigma$ and let $\{F_i\}_{i=1}^{n}$ be a geodesic frame for $T\Sigma$ in a neighbourhood $U\subset \Sigma$ of $p$, i.e., $\{F_i\vert_q\}_{i=1}^{n}$ is an orthonormal (with respect to $h=\iota^{\sharp}e$) basis of $T_q\Sigma$ for any $q\in U$, and $\nabla^{\Sigma}_{F_i}F_j\vert_p =  0$ for every $i,j$, where $\nabla^{\Sigma}$ is the Levi-Civita connection for the metric $h$ (which is simply $\nabla^{\Sigma} = \overline\nabla^{T}$ for the Euclidean connection $\overline\nabla$). It then holds that 
\[ P_a = \sum_{i=1}^{n}\langle F_i, \frac{\partial}{\partial y^a}\rangle F_i \] 
on $U$. Thus we find 
\begin{align*} \sum_{a=1}^{q} \left (P_a(f)\right )^{2} &= \sum_{a=1}^{d}\sum_{i,j=1}^{n} \langle F_i,\frac{\partial}{\partial y^a} \rangle \langle F_j, \frac{\partial}{\partial y^a}\rangle F_i(f)F_j(f) = \sum_{i=1}^{n} F_i(f)^{2} \\
& =  |\nabla_{\iota^{*}e} f|^{2}\,.\end{align*}
On the other hand, at $p$, it holds
 \begin{align*}
\sum_{a=1}^{q}P_a\left (P_a(f)\right ) &= \sum_{a=1}^{q} \sum_{i=1}^{n} \langle F_i,\frac{\partial}{\partial y^a} \rangle F_i \left (P_a(f)\right )  \\&=\sum_{i,j}\sum_{a=1}^{q}\langle F_i,\frac{\partial}{\partial y^a} \rangle \left (\langle F_j,\frac{\partial}{\partial y^a}\rangle F_i\left (F_j(f)\right )+ F_i (\langle F_j,\frac{\partial}{\partial y^a} \rangle ) F_j(f)  \right ) \\
&= \sum_{i=1}^{n}F_i(F_i(f)) + \sum_{i,j} \langle F_i, \overline \nabla_{F_i} F_j\rangle F_j(f) \\
&= \Delta_h f + \sum_{i,j} \langle F_i, \nabla^{\Sigma}_{F_i} F_j\rangle F_j(f) = \Delta_h f\,.
\end{align*}
Since $p\in\Sigma$ is arbitrary this shows \eqref{e:extrinsicSDE} with 
\[N_t^{f} := \int_0^{t}P_a\vert_{Z_s}(f)dB^{a}_s\,.\qedhere\]  \end{proof} 

\begin{lemma}\label{l:iso}
If $u$ is isometric it holds 
\begin{equation}\label{e:gradients}
 |\nabla_g (f\circ u)(p)|_g = |\nabla_{h} f(u(p))|\,, \end{equation}
 \begin{equation}\label{e:laplaces}\Delta_g(f\circ u)(p) = \Delta_{h}f(u(p))\end{equation} for any $p\in M$ and $f\in C^\infty(\Sigma)$. 
\end{lemma}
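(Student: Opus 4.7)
The plan is to exploit the fact that, under the isometric hypothesis, $u\colon (M,g)\to (\Sigma,h)$ is a Riemannian isometry. Indeed, since $h=\iota^\sharp e$, pulling back further gives $u^\sharp h = u^\sharp \iota^\sharp e = u^\sharp e = g$. Both identities \eqref{e:gradients} and \eqref{e:laplaces} then reduce to the classical principle that a Riemannian isometry intertwines intrinsic objects such as the gradient and the Laplace--Beltrami operator. I would present the gradient identity first and the Laplacian identity second, since the former is essentially a one-line tangent-space computation while the latter is cleanest in local coordinates.

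For \eqref{e:gradients}, the differential $du_p\colon T_pM \to T_{u(p)}\Sigma$ is a linear isometry. For any $X\in T_pM$,
\[
g(\nabla_g(f\circ u)(p), X) \;=\; X(f\circ u) \;=\; (du_p X)(f) \;=\; h\bigl(\nabla_h f(u(p)),\, du_p X\bigr).
\]
Rewriting the right-hand side via $h(du_p\cdot,\, du_p\cdot) = g(\cdot,\cdot)$ yields $\nabla_g(f\circ u)(p) = (du_p)^{-1}\nabla_h f(u(p))$, and then applying $du_p$ preserves lengths, which is \eqref{e:gradients}.

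For \eqref{e:laplaces}, I would work in adapted coordinates. Choose coordinates $(x^i)$ on $M$ near $p$; since $u$ is a smooth embedding, the composition with $u$ provides a coordinate chart for $\Sigma$ in a neighbourhood of $u(p)$, with the same coordinate functions. In these matched charts the isometric condition \eqref{eq:embed1} reads $h_{ij}(u(x)) = g_{ij}(x)$, whence $\sqrt{|h|}\circ u = \sqrt{|g|}$ and $h^{ij}\circ u = g^{ij}$; moreover $\partial_i(f\circ u) = (\partial_i f)\circ u$. Substituting these equalities into the coordinate formula \eqref{eq:laplace} for each Laplacian shows that the two sides of \eqref{e:laplaces} coincide at $p$.

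No real obstacle is expected: the lemma is a direct manifestation of the general fact that isometries preserve intrinsically defined Riemannian invariants, and the embedding hypothesis merely ensures that $(x^i)$ transfers cleanly to a chart on $\Sigma$. The only point to verify carefully is that the coordinate computations take place in genuine charts of both manifolds, which is where the smoothness of $u$ as an embedding (not merely an immersion) is used.
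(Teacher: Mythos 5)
Your proposal is correct. The gradient identity is handled essentially as in the paper: both arguments come down to the fact that $du_p$ is a linear isometry onto $T_{u(p)}\Sigma$ (the paper phrases this by pushing forward an orthonormal frame, you phrase it via the defining property of the gradient; these are the same computation). For the Laplacian identity, however, you take a genuinely different route. The paper writes $\Delta_g = \mathrm{trace}_g \nabla^2$ in a \emph{geodesic} orthonormal frame at $p$ and then invokes the Gauss formula to check that the pushforward $\{u_*E_i\}$ is again a geodesic frame for $(\Sigma,h)$ at $u(p)$, so the connection terms vanish on both sides. You instead transfer a coordinate chart through the embedding and observe that in matched charts the divergence-form expression \eqref{eq:laplace} is literally identical for $g$ and for $h=\iota^\sharp e$, since $u^\sharp h = u^\sharp e = g$ forces $h_{ij}\circ u = g_{ij}$ componentwise. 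Your version is more elementary in that it needs no Levi--Civita connection, no geodesic frames, and no Gauss formula --- only the naturality of \eqref{eq:laplace} under a diffeomorphism onto the image, which is exactly where the embedding (rather than immersion) hypothesis enters, as you correctly flag. The paper's frame-based argument buys consistency with the computations in Lemma \ref{l:SDEs}, where the same geodesic-frame machinery is already set up and reused, and it makes visible the geometric mechanism (tangentiality of $\nabla^\Sigma$ via the Gauss formula) that reappears in the proof of Theorem \ref{t:curvature}. Both proofs are complete and correct.
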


\begin{proof}
Fix $p\in M$ and a local orthonormal frame $\{E_i\}$ for $TM$ around $p$. We denote by $u_{*}E_i\in \mathfrak{X}(\Sigma)$ the pushforward vector field (acting on $f\in C^{\infty}(\Sigma)$ by $u_{*}E_i\vert_{u(p)}(f)= E_i\vert_p (f\circ u)$). Then 
\[ |\nabla_g (f\circ u)(p)|^{2} = \sum_{i=1}^{n} \left (E_i\vert_p (f\circ u)  \right )^{2} = \sum_{i=1}^{n} \left (u_*E_i\vert_{u(p)}(f)\right )^{2} = |\nabla_{h} f(u(p))|^{2}\,,\]
since $u$ is isometric and thus $\{u_{*}E_i\}$ is a local orthonormal frame for $T\Sigma$ around  $u(p)$. Similarly we can show \eqref{e:laplaces}. Recall that for a function $f\in C^{\infty}(M)$ 
\[ \Delta_g f = \mathrm{trace}_g \nabla^{2} f = \sum_{i=1}^{n} E_i(E_i(f)) - \nabla_{E_i}E_i(f)\,,\]
for any orthonormal frame $\{E_i\}$. If, as in the proof of Lemma \ref{l:SDEs}, we choose $\{E_i\}$ to be a local geodesic frame around $p$ we find that 
\[\Delta_g(f\circ u)(p) = \sum_{i=1}^{n}E_i\vert_p(E_i(f\circ u))= \sum_{i=1}^{n} u_{*}E_i\vert_{u(p)} \left (u_*E_i(f)\right ) = \Delta_h f(u(p))\,,\]
since $\{u_{*}E_i\}$ is a local geodesic frame for $T\Sigma$ around  $u(p)$. Indeed, by the Gauss formula it holds 
\[ \nabla^{\Sigma}_{u_{*}E_i}u_{*}E_j\vert_{u(p)} = du_p\left (\nabla^{M}_{E_i} E_j\right )= 0\,\]
for every $i,j$, finishing the proof of the lemma. 
\end{proof}

At this point we define (analogous to \cite{StroockVaradhan} (where $N=\R^d$)), for a  smooth Riemannian manifold $(N,h)$ with metric $h$ of class $C^{1}$ and $y\in N$, a  \emph{solution of the martingale problem for $\frac{1}{2}\Delta_{h}$ starting at $y$} as a probability measure $P^{y}$ on $(C([0,\infty[,N),\mathcal B)$ such that 
\begin{enumerate}
\item $P^{y}(\gamma(0)=y) =1 $
\item for any $f\in C^{\infty}(N)$, the process 
\begin{equation}\label{e:martingaleproperty}
M^{f}_t = f(\eta_t) - f(y) -\frac{1}{2}\int_0^{t}\Delta_{h} f (\eta_s)\,ds 
\end{equation}
is a martingale with respect to $P^{y}$ and the filtration $\mathcal B_t$. 
\end{enumerate} 
Here, $\mathcal B$ is the Borel $\sigma$-algebra on the path space $C([0,\infty[,N)$, $\mathcal B_t$ is the canonical filtration and $\eta_t:C([0,\infty[,N) \to N$ is the continuous map $\eta_t(\gamma) = \gamma(t)$ (see Subsection \ref{s:pathspace}).

Combining Lemma \ref{l:SDEs} and Lemma \ref{l:iso} it is apparent that if the embedding $u$ is isometric, then the laws of the processes $Y_t$ and $Z_t$ are both solutions to the martingale problem for $\frac{1}{2}\Delta_h$ starting at $u(x)$. 
The following uniqueness statement for solutions of the martingale problem then yields the equality of the laws of $Y_t$ and $Z_t$ when $u$ is isometric.

\begin{lemma}\label{l:mpuniqueness} Let $(N,h)$ be a smooth, closed Riemannian manifold  with $C^{1,\alpha}$ metric $h$ for some $\alpha\in ]0,1]$. Then for any $y\in N$ the martingale problem for $\frac{1}{2}\Delta_h$ starting at $y$ has at most one solution.
\end{lemma}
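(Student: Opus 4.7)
The plan is to reduce uniqueness of the martingale problem to classical solvability of the heat equation on $(N,h)$. Given two solutions $P_1, P_2$ starting at $y$, a standard conditioning argument via regular conditional distributions (in the spirit of Stroock--Varadhan) reduces the equality $P_1 = P_2$ on the full Borel $\sigma$-algebra $\mathcal{B}$ to equality of the one-dimensional marginals $(\eta_T)_*P_i$ for every $T>0$: disintegrating a solution against $\mathcal{B}_s$ yields (a.s.) a solution starting at $\eta_s$, so once one-dimensional marginals are unique, the Markov property and hence all finite-dimensional distributions are forced.

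For the marginals, fix $T>0$ and $f\in C^\infty(N)$. First I would invoke parabolic Schauder theory. In local coordinates
\[
\Delta_h = h^{ij}\partial_i\partial_j + \frac{1}{\sqrt{|h|}}\,\partial_j\bigl(\sqrt{|h|}\,h^{ij}\bigr)\,\partial_i,
\]
so the principal coefficients are $C^{1,\alpha}$ and the drift is $C^{0,\alpha}$, uniformly elliptic on the closed manifold $N$. Hence the Cauchy problem $\partial_t u = \tfrac{1}{2}\Delta_h u$ with $u(0,\cdot)=f$ admits a classical solution $u\in C^{1+\alpha/2,\,2+\alpha}([0,T]\times N)$.

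Next I would use $u$ as a time-dependent test function. Setting $v(s,x)=u(T-s,x)$, we have $\partial_s v + \tfrac{1}{2}\Delta_h v \equiv 0$ on $[0,T]\times N$. A standard approximation extends the martingale property \eqref{e:martingaleproperty} to such $v$: partition $[0,T]$ as $0=s_0<\cdots<s_K=T$, telescope the increments of $v(s_k,\eta_{s_k})$, apply the martingale property to each slice $v(s_k,\cdot)\in C^{2,\alpha}(N)$ (accessible from $C^\infty$ by density, since the coefficients of $\Delta_h$ are continuous), and let the mesh tend to zero, controlling the remainder through the parabolic regularity of $v$. This shows that $v(s,\eta_s)$ is a $P_i$-martingale, so $\E^{P_i}[f(\eta_T)] = v(0,y) = u(T,y)$, a quantity independent of $i$. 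Since $f$ and $T$ were arbitrary, the one-dimensional marginals coincide.

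I expect the principal technical point to be the time-dependent approximation in the third step, together with the preliminary extension of the martingale class from $C^\infty$ to $C^{2,\alpha}$ test functions; both are routine but rely crucially on the Schauder regularity of $u$ to make the error from the time-partitioning vanish uniformly. By contrast, the Markov reduction to one-dimensional marginals and the Schauder existence step itself are essentially off-the-shelf given the $C^{1,\alpha}$ assumption on $h$ and the compactness of $N$.
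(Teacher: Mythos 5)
Your argument is correct and follows the same overall Stroock--Varadhan strategy as the paper --- reduce to uniqueness of one-dimensional marginals via regular conditional distributions, and obtain that uniqueness from classical solvability of a PDE associated with $\tfrac12\Delta_h$ --- but the PDE you solve is different. The paper works with the elliptic resolvent equation $\lambda u_\lambda - \tfrac12\Delta_h u_\lambda = \varphi$: a weak $W^{1,2}$ solution is produced by energy methods, upgraded to $C^{2,\alpha}$ by elliptic Schauder theory (the coefficients of $\Delta_h$ being $C^{0,\alpha}$), and then the marginals are identified through their Laplace transforms, $\E^{P_i}\bigl[\int_0^\infty e^{-\lambda t}\varphi(\eta_t)\,dt\bigr]=u_\lambda(y)$, following the proof of Theorem 5.1.3 in Stroock's book. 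You instead solve the parabolic Cauchy problem and use the backward solution $v(s,x)=u(T-s,x)$ as a time-dependent test function, identifying $\E^{P_i}[f(\eta_T)]=u(T,y)$ directly. The trade-off is roughly even: your route invokes parabolic rather than elliptic Schauder theory and carries the burden of the telescoping/mesh-refinement argument for time-dependent test functions, whereas the paper's route replaces this with the (equally routine) computation showing $e^{-\lambda t}u_\lambda(\eta_t)+\int_0^t e^{-\lambda s}\varphi(\eta_s)\,ds$ is a martingale. Both routes require the same preliminary extension of the martingale property from $C^\infty$ to $C^{2,\alpha}$ test functions, which you correctly flag and justify by density using only continuity and boundedness of the coefficients. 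One small point worth making explicit in your write-up: global (up to $t=0$) parabolic Schauder regularity for the Cauchy problem is what you need for the mesh errors to vanish uniformly on all of $[0,T]$; this holds here because the initial datum is smooth and $N$ is closed, but if one wanted to be cautious one could run the telescoping on $[0,T-\delta]$ and let $\delta\downarrow 0$ using continuity of paths.
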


\begin{proof}
Observe that for any $\varphi \in C^{\infty}(N)$ and any $\lambda >0$ it is straightforward to show the existence of a weak solution $u_\lambda \in W^{1,2}(N)$ to 
\[\lambda u -\frac{1}{2}\Delta_h u = \varphi\,\]
by energy methods (see e.g. \cite{Aubin}). Since the coefficients of $\frac{1}{2}\Delta_h$ in any coordinate chart are $C^{0,\alpha}$-functions, Schauder theory implies that in fact $u_\lambda\in C^{2,\alpha}(N)$. We can then argue as in the proof of Theorem 5.1.3 of \cite{Stroock} (which treats the case $N=\R^{n}$) to conclude. 
\end{proof} 

Conversely, assume the processes $Y_t$ and $Z_t$ have the same law for each $x \in M$. Since $u$ is an embedding, this implies that the processes $X_t$ and $\bar X_t := u^{-1}(Z_t)$ have the same law. On the other hand, from Lemmata \ref{l:SDEs} and \ref{l:iso} we find that $\bar X_t$ is Brownian motion on $(M,u^{\sharp}e)$. 

Recall that if $h$ is a smooth  Riemannian metric and $X_t^x$ is Brownian motion on $(M,h)$ starting at $x$, then the function $v(t,x) = \mathbb E[ f(X^x_t)]$ solves the heat equation on $(M,h)$ with initial datum $f\in C^\infty(M)$ (see Theorem 3.1. in  Chapter V of \cite{Ikeda}). Thanks to the uniqueness of solutions it therefore holds
\begin{equation}\label{e:BMandheat}
    \mathbb E[ f(X^x_t)] = \int_M k_h(t,x,y)f(y)d\mathrm{vol}_h(y)\,,
\end{equation}
where $k_h$ is the heat kernel of $h$ (see also Subsection \ref{s:heateq}).
For a given $f\in C^\infty(M)$ it therefore follows from \eqref{e:BMandheat} and the equality in law of the processes $X_t$ and $\bar X_t$ that 
\begin{equation}
\label{e:varadhan-integral}
\int_M k_g(t,x,y)f(y)d\text{vol}_g(y) = \int_M k_{u^{\sharp} e}(t,x,y)f(y)d\text{vol}_{u^\sharp e}(y)\,,
\end{equation} 
and hence by localization 
\[ k_g(t,x,y) \sqrt{|g|}(y) =  k_{u^\sharp e}(t,x,y) \sqrt{|u^\sharp e|}(y)\,\]
for any $t\geq 0 $ and any $x,y \in M$ close enough. 
Varadhan's lemma then implies 
\[ d^2_g(x,y)=  -2\lim_{t\to 0} t \log k_g(t,x,y) = -2\lim_{t\to 0} t \log \left(k_g(t,x,y) \sqrt{|g|}(y) \right ) = d^2_{u^\sharp e} (x,y)\,.\]
Since the Riemannian metrics $g$ and $u^\sharp e$ are smooth, 
we can recover them from the geodesic distances through differentiation separately in $x$ and $y$, followed by passage to the limit $x=y$. The reader unfamiliar with this calculation may find the details in~\cite[Lemma,p.380]{BBG}, observing that $g, u^\sharp e \in C^2$ suffices to justify the interchange of limits. 
\begin{remark}
In Lemma~\ref{l:distanceiso} below, we present an independent argument to recover the metric from geodesic distances without differentiation of the distance functions.
\end{remark}

\begin{remark}
Hsu has established a large deviation principle (LDP) in the context studied here~\cite[Thm. 2.2]{Hsu-geod}. However, we do not use this LDP because of the need to explicitly monitor the regularity assumptions on $g$ and $u$ in our work, as well as the fact that short-time asymptotics of heat kernels suffice for our needs. 
\end{remark}

\section{Proof of Theorem \ref{t:curvature}}
\label{sec:proof-curvature}
It follows from \eqref{e:extrinsicSDE} (choosing $f= y^b$)  that
\[ dZ^{b}_t= P_a(y^b)dB^{a}_t + \frac{1}{2}\Delta_\Sigma y^b dt  \,.\]
The proof of Theorem \ref{t:curvature} then follows from the identity 
\begin{equation}\label{e:meancurvatureid} \Delta_{\Sigma} y = H\end{equation}
found in the mean curvature flow literature (see e.g. \cite{Ecker}). For the reader's convenience, we prove this   identity. 

Fix $p\in \Sigma$ and a local orthonormal frame $\{F_i\}_{i=1}^{n}$ for $T\Sigma$ around $p$. By definition, it holds at $p$ 
\begin{align*}\Delta_\Sigma y^b  &= \sum_{i=1}^{n}\left ( F_i(F_i(y^b)) - \nabla^{\Sigma}_{F_i }F_i (y^b) \right )  \\
&=  \sum_{i=1}^{n}\left ( F_i(F_i(y^b)) - \overline{\nabla}_{F_i }F_i (y^b) \right ) + H(y^b)\,,\end{align*}
where $\overline{\nabla}$ denotes the covariant derivative in $\R^q$. On the other hand, writing $F_i = F_i^{c}\frac{\partial}{\partial y^c} $, we see 
\[ \overline{\nabla}_{F_i}F_i(y^b) = F_i(F_i^c)\frac{\partial}{\partial y^c}(y^b) + F_i^c\overline{\nabla}_{F_i}\frac{\partial}{\partial y^c}(y^b)= F_i(F_i^{b})\,,\]
since $\overline{\nabla}$ is flat. But $F_i^b = F_i(y^b)$, so that $\Delta_\Sigma y^b = H(y^b) = H^{b}$ at $p$ for any $b=1,\ldots,q$, which concludes the proof.

\section{Proof of Theorem \ref{t:intrinsicBMlowreg}}
\label{sec:proof3}
\subsection{Preliminaries}
\begin{subsubsection}{Path space}\label{s:pathspace}
For a given $(M,g)$ Riemannian manifold we let $C([0,\infty[,M)$ denote the space of continuous paths $\gamma:[0,\infty[\to M$, as usual equipped with the metric
\begin{equation}\label{e:metriconpathspace}\rho_g(\gamma,\tilde \gamma) = \sum_{n=1}^{\infty}\frac{1}{2^{n}} \max_{0\leq t\leq n}\min\{\mathrm{d}_g\left (\gamma(t),\tilde \gamma(t)\right ),1\}\,,\end{equation}
(where $d_g$ denotes the Riemannian distance function), under which  $C([0,\infty[,M)$ is a complete, seperable metric space with the topology of compact convergence. The Borel-$\sigma$ algebra on $C([0,\infty[, M)$ is denoted by $\mathcal B =\mathcal B(C([0,\infty[,M))$. The canonical filtration is given by $\mathcal B_t= \varphi_t^{-1}\mathcal B$ for $\varphi_t:C([0,\infty[,M)\to C([0,\infty[,M)$ defined through $\varphi_t \gamma (s) = \gamma(\min\{s,t\})$. For any $t\geq 0$ we let $\eta_t:C([0,\infty[,M) \to M$ be the continuous map $\eta_t(\gamma) = \gamma(t)$. 
\end{subsubsection}

\subsubsection{H\"older norms} Let $\Omega \subset \R^{n}$ be an open set and $f$ a real valued, vector valued, or tensor valued map defined on $\Omega$. In every case, the target is equipped with the Euclidean norm, denoted by $|f(x)|$. The H\"older norms are then defined as follows:
\begin{equation*}
\|f\|_0=\sup_{\Omega}|f|, ~~\|f\|_m=\sum_{j=0}^m\max_{|\beta|=j}\|\partial^\beta f\|_0,,
\end{equation*}
where $\beta$ denotes a multi-index,
and
\begin{equation*}
[f]_{\theta}=\sup_{x\neq y}\frac{|f(x)-f(y)|}{|x-y|^\theta},~~[f]_{m+\theta}=\max_{|\beta|=m}\sup_{x\neq y}\frac{ |\partial^\beta f(x)-\partial^\beta f(y)|}{|x-y|^\theta}, 0<\theta\leq1.
\end{equation*}
Then the H\"older norms are given as
$$\|f\|_{m+\theta}=\|f\|_m+[f]_{m+\theta}. $$

%
%

\subsubsection{Mollification estimates}
In the proof of Theorem \ref{t:renorm}
we will regularize the embedding by convolution with a standard mollifier, i.e., a radially symmetric $\varphi_\varepsilon\in C^{\infty}_c(B_\varepsilon(0))$ with $\int\varphi_\varepsilon = 1$, where $\varepsilon>0$ denotes the length-scale. Such a regularization of H\"older functions enjoys the following estimates (for a proof, see for example \cite{CDS},\cite{DIS}).
\begin{lemma}\label{l:mollification}
For any $p\geq  0,$ and $0<\alpha\leq1,$ we have
\begin{align}
&\|f-f*\varphi_\varepsilon\|_0\leq C\varepsilon^{\alpha}\|f\|_{\alpha}, \nonumber,\\
&\|(fg)*\varphi_\varepsilon-(f*\varphi_\varepsilon)(g*\varphi_\varepsilon)\|_p\leq C\varepsilon^{2\alpha-p}\|f\|_\alpha\|g\|_\alpha \label{e:commutator}
\end{align}
with constant $C$ depending only on $d, p, \beta, \varphi.$
\end{lemma}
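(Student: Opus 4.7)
The first estimate is immediate from $\int\varphi_\varepsilon = 1$ and the support property of $\varphi_\varepsilon$: writing
\[ f(x) - (f*\varphi_\varepsilon)(x) = \int \varphi_\varepsilon(y)\bigl(f(x) - f(x-y)\bigr)\, dy, \]
the pointwise Hölder bound $|f(x) - f(x-y)| \leq [f]_\alpha |y|^\alpha \leq [f]_\alpha \varepsilon^\alpha$ on the support of $\varphi_\varepsilon$ yields the claim after integration.

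For the commutator estimate the plan is to exploit the classical algebraic identity (going back to Constantin--E--Titi) which follows by direct expansion from $\int \varphi_\varepsilon = 1$:
\[ (fg)*\varphi_\varepsilon(x) - (f*\varphi_\varepsilon)(x)(g*\varphi_\varepsilon)(x) = \int \varphi_\varepsilon(y)\bigl(f(x-y) - (f*\varphi_\varepsilon)(x)\bigr)\bigl(g(x-y) - (g*\varphi_\varepsilon)(x)\bigr)\, dy. \]
Averaging the Hölder estimate $|f(x-y) - f(x-z)|\leq [f]_\alpha |y-z|^\alpha$ against $\varphi_\varepsilon(z)$ gives $|f(x-y) - (f*\varphi_\varepsilon)(x)| \leq C[f]_\alpha \varepsilon^\alpha$ on the support of $\varphi_\varepsilon$, and likewise for $g$. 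This settles the $p=0$ case with the crucial gain of $\varepsilon^{2\alpha}$.

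For $p \geq 1$ I would change variables $u = x-y$ and differentiate the identity $p$ times in $x$. By Leibniz each derivative lands either on the factor $\varphi_\varepsilon(x-u)$ inside the integral, or on one of the mollified factors $(f*\varphi_\varepsilon)(x)$, $(g*\varphi_\varepsilon)(x)$ outside. The latter are controlled by the Bernstein-type bound $\|\partial^\gamma(f*\varphi_\varepsilon)\|_0 \leq C\varepsilon^{\alpha - |\gamma|}\|f\|_\alpha$ for $|\gamma| \geq 1$, proved by writing $\partial^\gamma(f*\varphi_\varepsilon) = f*\partial^\gamma \varphi_\varepsilon$, invoking $\int \partial^\gamma \varphi_\varepsilon = 0$ to turn this into an integral of differences, and applying the Hölder bound. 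Derivatives landing on $\varphi_\varepsilon(x-u)$ preserve the $\varepsilon^{2\alpha}$ difference structure at the cost of $\|\partial^\beta \varphi_\varepsilon\|_{L^1} \leq C\varepsilon^{-|\beta|}$. The only delicate point is the bookkeeping: whenever derivatives hit $\varphi_\varepsilon(x-u)$ but none hit the $f$- or $g$-factors, the identity $\int \partial^\gamma \varphi_\varepsilon = 0$ must be invoked to cancel the leading constant piece so that only the Hölder difference survives; a short case analysis then shows every term is bounded by $C\varepsilon^{2\alpha - p}\|f\|_\alpha \|g\|_\alpha$. The combinatorial bookkeeping of these cases is the main (though mild) obstacle.
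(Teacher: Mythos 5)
Your proof is correct and coincides with the argument in the references \cite{CDS,DIS} to which the paper defers (it gives no proof of its own): the first bound by averaging H\"older differences, and the commutator bound via the Constantin--E--Titi identity, differentiation under the integral, and the Bernstein-type estimate $\|\partial^\gamma(f*\varphi_\varepsilon)\|_0\leq C\varepsilon^{\alpha-|\gamma|}\|f\|_\alpha$. The bookkeeping you flag is even milder than you fear, since the factors $f(x-y)-(f*\varphi_\varepsilon)(x)$ and $g(x-y)-(g*\varphi_\varepsilon)(x)$ are already $O(\varepsilon^\alpha)$ pointwise on the support of $\varphi_\varepsilon$, so the cancellation $\int\partial^\gamma\varphi_\varepsilon=0$ is only genuinely needed for the Bernstein bound itself.
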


\subsubsection{H\"older norms and mollification on manifolds}\label{s:mollificationonmanifolds}
Using a partition of unity, H\"older spaces and mollification can be defined on the compact manifold $\mathcal M$ as follows. We fix a finite atlas of $\mathcal M$ with charts $(\Omega_i,\phi_i)$, we let $\{\chi_i\}$ be a partition of unity subordinate to $\{\Omega_i\}$ and set
\[ \|f\|_k = \sum_i \|(\chi_i f) \circ \phi_i^{-1}\|_k\,,\]
and
\[ f\ast \varphi_\varepsilon = \sum_i \left ((\chi_i f)\circ \phi_i^{-1}\ast \varphi_\varepsilon\right )\circ \phi_i\,.\]
One can check that the estimates of Lemma \ref{l:mollification} still hold (with constants which may depend on the fixed charts). 

A straightforward consequence of the commutator estimate \eqref{e:commutator} is the following 
\begin{lemma} \label{l:christoffels} Let $(M,g)$ be a smooth compact manifold with Riemannian metric $g\in C^{2}$ and let $u:M\to \R^{q}$ be an isometric embedding of regularity $C^{1,\alpha}$. Denote by $u^{\varepsilon}$ the mollification of $u$ as described above. Then 
\begin{equation}\label{e:convergenceofmetrics}
\|g-(u^{\varepsilon})^{\sharp}e \|_{1}\leq C\varepsilon^{2\alpha-1} \,.
\end{equation}
\end{lemma}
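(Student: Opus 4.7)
The plan is to rewrite the difference $g - (u^\varepsilon)^\sharp e$ so that the commutator structure of the mollification becomes visible, and then apply the commutator estimate in Lemma \ref{l:mollification}. Using the partition-of-unity definition of mollification from Section \ref{s:mollificationonmanifolds}, it suffices to work locally in a single coordinate chart. There the isometry condition reads $g_{ij} = \partial_i u^a\,\partial_j u^a$ (summed over $a$), while the mollified pull-back is
\begin{equation*}
\bigl((u^\varepsilon)^\sharp e\bigr)_{ij} = \partial_i u^{\varepsilon,a}\,\partial_j u^{\varepsilon,a} = (\partial_i u^a*\varphi_\varepsilon)(\partial_j u^a*\varphi_\varepsilon),
\end{equation*}
modulo lower-order contributions arising when derivatives fall on the partition-of-unity cutoffs, which are smooth in $u$ and uniformly bounded in every $C^k$-norm independently of $\varepsilon$.

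Using the isometry $g = u^\sharp e$ to replace $g_{ij}$ by $\partial_i u^a\,\partial_j u^a$ inside the convolution, I propose the decomposition
\begin{equation*}
g_{ij} - \bigl((u^\varepsilon)^\sharp e\bigr)_{ij} = \bigl(g_{ij} - g_{ij}*\varphi_\varepsilon\bigr) + \bigl((\partial_i u^a\,\partial_j u^a)*\varphi_\varepsilon - (\partial_i u^a*\varphi_\varepsilon)(\partial_j u^a*\varphi_\varepsilon)\bigr),
\end{equation*}
splitting the error into a smoothing term plus a convolution commutator. For the smoothing term, the hypothesis $g \in C^2$ together with Lemma \ref{l:mollification} applied with exponent $1$ to $g$ and to its first derivatives gives $\|g - g*\varphi_\varepsilon\|_1 \leq C\varepsilon\|g\|_2$. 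For the commutator, the estimate \eqref{e:commutator} with $p=1$ applied to the pair $\partial_i u^a, \partial_j u^a \in C^{0,\alpha}$ yields the bound $C\varepsilon^{2\alpha-1}\|u\|_{1+\alpha}^2$. Since $\alpha \leq 1$ forces $2\alpha - 1 \leq 1$, the commutator contribution dominates the smoothing error as $\varepsilon \to 0$, and summing over the finite atlas establishes \eqref{e:convergenceofmetrics}.

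The conceptual content is minimal: this is a geometric restatement of the Constantin--E--Titi commutator estimate familiar from the Onsager conjecture and already exploited in the embedding setting in \cite{CDS,DIS}. The only bookkeeping point is to verify that the partition-of-unity corrections from the manifold definition of mollification do not spoil the commutator structure. This follows because every such correction involves at least one factor that is smooth in $u$ and uniformly bounded in $\varepsilon$, so that the corresponding terms contribute at worst $O(\varepsilon)$ in the $C^1$-norm and can be absorbed into the smoothing error. I therefore do not anticipate any genuine obstacle beyond making this localization explicit.
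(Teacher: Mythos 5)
Your argument is correct and is essentially the proof the paper relies on: the paper simply cites Proposition~1 of \cite{CDS}, whose content is exactly your decomposition into a $C^2$-smoothing error plus the Constantin--E--Titi-type commutator $\bigl((\partial_i u^a\,\partial_j u^a)*\varphi_\varepsilon - (\partial_i u^a*\varphi_\varepsilon)(\partial_j u^a*\varphi_\varepsilon)\bigr)$, estimated via \eqref{e:commutator} with $p=1$. Your handling of the partition-of-unity corrections is the right bookkeeping and introduces nothing beyond what the cited proof requires.
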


The short proof of the previous lemma is contained in \cite{CDS} Proposition 1.

\subsection{Proof of Theorem \ref{t:intrinsicBMlowreg}}
\begin{lemma} \label{l:tightness} For any $x\in M$, the sequence $\{P^{k,x}\}_{k\in \N} $ is tight and therefore relatively compact in the set of probability measures on $C([0,\infty[,M)$ with respect to weak convergence.
\end{lemma}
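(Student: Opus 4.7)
The plan is to verify the standard criterion for tightness on path space: tight one-dimensional marginals together with a uniform Kolmogorov-type modulus of continuity estimate. Because $\rho_g$ in~\eqref{e:metriconpathspace} induces the topology of uniform convergence on compact time intervals, tightness of $\{P^{k,x}\}$ on $C([0,\infty[,M)$ is equivalent to tightness of the restrictions to $C([0,T],M)$ for each $T>0$. Since $M$ is compact, the marginals are automatically tight, so only the modulus of continuity needs work.

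Fix once and for all a smooth embedding $\Phi=(\Phi^1,\ldots,\Phi^N):M\hookrightarrow\R^N$. Compactness of $M$ makes $d_g$ bi-Lipschitz equivalent to the pullback of the Euclidean distance via $\Phi$, so it suffices to establish the Kolmogorov bound
\[
\mathbb{E}\bigl[\,|\Phi^a(X^{k,x}_t)-\Phi^a(X^{k,x}_s)|^4\,\bigr]\leq C_T\,|t-s|^2,\qquad 0\leq s\leq t\leq T,
\]
uniformly in $k\geq 1$ and $a=1,\ldots,N$. Since each $g_k$ is smooth, Lemma~\ref{l:SDEs} applied with metric $g_k$ and test function $f=\Phi^a$ gives
\[
\Phi^a(X^{k,x}_t)=\Phi^a(x)+\tfrac{1}{2}\int_0^t\Delta_{g_k}\Phi^a(X^{k,x}_r)\,dr+M^{k,a}_t,
\]
where $M^{k,a}$ is a martingale with $\langle M^{k,a}\rangle_t=\int_0^t|\nabla_{g_k}\Phi^a|^2_{g_k}(X^{k,x}_r)\,dr$.

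The key point is to make the bounds on drift and quadratic variation uniform in $k$. Because $\|g_k-g\|_{C^1(M)}\to 0$, the components $g_k^{ij}$ and their first derivatives are uniformly bounded on the compact manifold $M$, which together with $\Phi^a\in C^\infty(M)$ produces a constant $K$ independent of $k$ such that $|\Delta_{g_k}\Phi^a|\leq K$ and $|\nabla_{g_k}\Phi^a|^2_{g_k}\leq K$ pointwise on $M$. The Burkholder-Davis-Gundy inequality then yields
\[
\mathbb{E}\bigl[\,|M^{k,a}_t-M^{k,a}_s|^4\,\bigr]\leq C\,\mathbb{E}\bigl[(\langle M^{k,a}\rangle_t-\langle M^{k,a}\rangle_s)^2\bigr]\leq CK^2|t-s|^2,
\]
while the drift contributes at most $(K|t-s|/2)^4\leq C_T|t-s|^2$ for $|t-s|\leq T$; combining via $(a+b)^4\leq 8(a^4+b^4)$ delivers the desired Kolmogorov estimate, and Prohorov's theorem converts tightness into relative compactness. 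The only genuine obstacle is ensuring that the $C^1$ convergence of $g_k$ provides simultaneous uniform control on the diffusion matrix $\sigma^i_j$ and drift $b^i$ from~\eqref{eq:intrinsic3}; using a fixed smooth embedding $\Phi$ avoids any chart-patching and reduces the whole argument to a single scalar application of Lemma~\ref{l:SDEs}.
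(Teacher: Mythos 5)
Your proof is correct, but it takes a genuinely different route from the paper's. The paper verifies the modulus-of-continuity criterion \eqref{e:Aldous} directly: it reduces the estimate on $\mathbb P(d_g(X^{k,x}_t,X^{k,x}_s)\geq\varepsilon)$ to a uniform exit-time bound $\sup_{x}\mathbb P(\tau^{k,x}_\varepsilon\leq t)\leq C_\varepsilon t$ via the Markov property, and proves that bound by applying the martingale property \eqref{e:martingaleprop} to the test function $f^x_\varepsilon=\theta_\varepsilon\circ d_g(x,\cdot)$, a cutoff of the intrinsic distance function; the only input is the uniform $C^0$ bound $\|\Delta_{g_k}f^x_\varepsilon\|_0\leq C_\varepsilon$. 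You instead push the processes into $\R^N$ by a fixed smooth embedding $\Phi$ and verify the Kolmogorov fourth-moment criterion using BDG, with the uniform coefficient bounds coming from the same $C^1$ convergence of $g_k$. Both arguments rest on identical uniform control of the generators, but they buy slightly different things: the paper's route stays intrinsic, needs only first-order information about the test functions through the generator (no moments, no ambient space), and fits the martingale-problem framework used throughout; it does, however, invoke the Markov property of the approximating diffusions to pass from increments at time $0$ to increments at arbitrary $s<t$. Your route avoids the Markov property entirely and yields a stronger, quantitative conclusion (a uniform-in-$k$ H\"older modulus for the paths), at the cost of importing the ambient embedding and the BDG machinery. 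Two small points you gloss over but which are easily repaired: the uniform bound on $|\Delta_{g_k}\Phi^a|$ requires a uniform lower bound on $\det g_k$ (available from $C^0$ convergence to the nondegenerate limit $g$ on the compact $M$, plus finitely many exceptional $k$), and the passage from tightness of $\Phi(X^{k,x})$ in $C([0,T],\R^N)$ back to tightness in $C([0,T],M)$ uses that $\Phi$ is a bi-Lipschitz homeomorphism onto its compact image, which you do state.
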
 
We prove Lemma~\ref{l:tightness} in the next subsection. We prove Theorem \ref{t:intrinsicBMlowreg} using Lemma~\ref{l:tightness} and the following

\begin{lemma}\label{l:convergenceoflaw} Let $\{P^{k_j,x}\}_{j\in \N}$ be a subsequence which converges weakly to a probability measure $P^{x}$. Then $P^{x}$ is a solution to the martingale problem for $\frac{1}{2}\Delta_g$ starting at $x$.
\end{lemma}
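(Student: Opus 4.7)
The plan is to verify the two defining properties of the martingale problem for $P^x$: that $P^x(\gamma(0)=x)=1$, and that for every $f\in C^\infty(M)$ the process $M^f_t = f(\eta_t)-f(x)-\tfrac{1}{2}\int_0^t \Delta_g f(\eta_s)\,ds$ is a $P^x$-martingale with respect to the canonical filtration $\mathcal{B}_t$. The first property is immediate from the Portmanteau theorem: the set $A=\{\gamma\in C([0,\infty[,M) : \gamma(0)=x\}$ is closed (since $\eta_0$ is continuous), and $P^{k_j,x}(A)=1$ for every $j$, so $1=\limsup_j P^{k_j,x}(A)\leq P^x(A)$.

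For the martingale property, I would fix $f\in C^\infty(M)$, $0\le s<t$, and an arbitrary bounded continuous $\mathcal{B}_s$-measurable function $\Phi$ on path space, and prove
\begin{equation*}
\int \Phi(\gamma)\Bigl[f(\gamma_t)-f(\gamma_s)-\tfrac{1}{2}\int_s^t\Delta_g f(\gamma_r)\,dr\Bigr]\,dP^x(\gamma)=0.
\end{equation*}
Since each $P^{k_j,x}$ solves the martingale problem for $\tfrac{1}{2}\Delta_{g_{k_j}}$, the corresponding integral with $\Delta_{g_{k_j}}$ in place of $\Delta_g$ vanishes. The task therefore reduces to showing (a) that the functional
\begin{equation*}
F(\gamma)=\Phi(\gamma)\Bigl[f(\gamma_t)-f(\gamma_s)-\tfrac{1}{2}\int_s^t\Delta_g f(\gamma_r)\,dr\Bigr]
\end{equation*}
is bounded and continuous on $C([0,\infty[,M)$, so that $\int F\,dP^{k_j,x}\to \int F\,dP^x$ by weak convergence; and (b) that the error term
\begin{equation*}
\int \Phi(\gamma)\,\tfrac{1}{2}\int_s^t\bigl(\Delta_{g_{k_j}}-\Delta_g\bigr)f(\gamma_r)\,dr\,dP^{k_j,x}(\gamma)
\end{equation*}
tends to $0$ as $j\to\infty$. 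Part (a) is routine: $f$, $\Delta_g f$, and the path-evaluation maps are continuous, and $M$ compact makes everything bounded. For (b), note that the error is bounded by $\tfrac{t-s}{2}\|\Phi\|_\infty\,\|\Delta_{g_{k_j}}f-\Delta_g f\|_{C^0(M)}$.

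The main obstacle is thus showing $\|\Delta_{g_{k_j}}f - \Delta_g f\|_{C^0(M)}\to 0$ using only the hypothesis $\|g_{k_j}-g\|_{C^1}\to 0$. Writing in local coordinates
\begin{equation*}
\Delta_g f = g^{ij}\partial_i\partial_j f + \frac{1}{\sqrt{|g|}}\partial_i\bigl(\sqrt{|g|}\,g^{ij}\bigr)\partial_j f,
\end{equation*}
we see that $\Delta_g f$ depends on $f$ through its second derivatives (benign, since $f\in C^\infty$ is fixed) and on $g$ through $g^{-1}$ and one derivative of $g$. Since $g_{k_j}\to g$ in $C^1$ and $g$ is uniformly positive definite, $g_{k_j}^{ij}\to g^{ij}$ in $C^0$, $\sqrt{|g_{k_j}|}\to\sqrt{|g|}$ in $C^1$, and consequently $\Delta_{g_{k_j}}f\to \Delta_g f$ uniformly on $M$. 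Combining (a) and (b) gives $\int F\,dP^x=0$. A standard monotone class argument, using that the bounded continuous $\mathcal{B}_s$-measurable functions generate $\mathcal{B}_s$, upgrades this to the full martingale property and completes the proof.
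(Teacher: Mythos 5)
Your proposal is correct and follows essentially the same route as the paper: both arguments pass the martingale identity from $P^{k_j,x}$ to the limit by combining weak convergence on path space with the key estimate $\|\Delta_{g_{k_j}}f-\Delta_g f\|_{C^0}\le C\|g_{k_j}-g\|_{C^1}\|f\|_{C^2}$ read off from the coordinate expression of the Laplace--Beltrami operator. Your splitting into a fixed continuous functional plus an error term, and your Portmanteau argument for the initial condition, are only cosmetic variants of the paper's uniform-convergence-of-integrands argument.
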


Combined with the uniqueness assertion of Lemma \ref{l:mpuniqueness} this Lemma shows that the full sequence $\{P^{k,x}\}_{k\in \N}$ converges weakly to the solution of the martingale problem for $\frac{1}{2}\Delta_g$ starting at $x$, finishing the proof of Theorem \ref{t:intrinsicBMlowreg}. 
%


\begin{proof}[Proof of Lemma \ref{l:convergenceoflaw}] The argument is similar the one used to prove the existence of weak solutions of stochastic differential equations with bounded, continuous coefficients (see e.g. Theorem 5.4.22 in \cite{Karatzas}). For the reader's convenience we present the argument. Firstly, let us abbreviate $P^{j}:= P^{k_j,x}$ and observe that since $P^{j}$ is the law of Brownian motion $X^{k_j,x}$ starting at $x$ it holds $P^{j}(\gamma(0)=x) =1$. By the weak convergence we therefore find for any $f\in C^{\infty}(M)$ that 
\[\int_{C([0,\infty[,M)}f(\gamma(0))\,dP^{x}= \lim_{j\to\infty} \int_{C([0,\infty[,M)}f(\gamma(0))\,dP^{j} = f(x)\,,\]
from which $P^{x}(\gamma(0)=x) =1 $ follows. It remains to show \eqref{e:martingaleproperty}, or, equivalently,  that for any $0\leq s<t<\infty $ and any $\mathcal B_s$ measurable $F\in C_b(C([0,\infty[,M))$ it holds  
 \[ \mathbb E^{P}\left [ \left (M^{f}_t-M^{f}_s\right )F \right ] =0\,,\]
 where $M^{f}_t $ is as in \eqref{e:martingaleproperty} with $h=g$. 
 Fix then $0\leq s<t<\infty $ and $F\in C_b(C([0,\infty[,M))$ which is $\mathcal B_s$ measurable. Observe that since $X^{k}$ is Brownian motion on $(M,g_k)$, it holds 
 \begin{equation}\label{e:martingalej}\mathbb E^{P^{k_j}}\left [ \left (M^{f,j}_t-M^{f,j}_s\right )F \right ] =0\,,\end{equation} where $M^{f,j}_t:C([0,\infty[,M)\to \R$ is the process defined by
\[ M^{f,j}_t(\gamma) = f(\gamma(t))-f(x)-\frac{1}{2}\int_0^{t}\Delta_{g_{k_j}}f(\gamma(s))\,ds\,.\]
We will now show that $G_j:C([0,\infty[,M)\to\R$ given by 
\[ G_j(\gamma) = M^{f,j}_t(\gamma)-M^{f,j}_s(\gamma)\,\] 	
is a uniformly bounded sequence of continuous functions converging uniformly on $C([0,\infty[,M)$ to $G(\gamma) = M^{f}_t(\gamma)-M^{f}_s(\gamma)$. We can then pass to the limit in \eqref{e:martingalej} to conclude (see e.g. Problem 2.4.12 in \cite{Karatzas}). Indeed, observe first that if $d_g(\rho(t),\tilde \rho(t))$ is smaller than the injectivity radius of $M$ (which is strictly positive since $M$ is compact without boundary),  we get
\[ |M^{f}_t(\gamma)-M^{f}_t(\tilde \gamma)| \leq \left (\|f\|_1+ Ct\|f\|_3\right )d_g(\gamma(t),\tilde\gamma(t))\leq Ct\|f\|_3 \rho_g(\gamma,\tilde \gamma) \]
for some constant $C$  depending on $M$ and $g$. A similar estimate holds for $M^{f,j}_t$, thus $G_j$ is uniformly continuous. For the convergence we  observe that for any $k\geq 1$ large enough and any function $h\in C^{2}(M)$ we can estimate
\begin{equation}\label{e:laplacianestimate} \|(\Delta_g-\Delta_{g_k})h\|_0 \leq C\|g-g_k\|_{1}\|h\|_2\,,\end{equation}
for some constant depending on $M$ and $g$, 
as can be seen from the coordinate expression of the Laplace-Beltrami operator. This implies
\[ \sup _{\gamma \in C([0,\infty[,M)}|G_j(\gamma)-G(\gamma)|\leq C|t-s|\|g-g_k\|_1\|f\|_2 \to 0\,\]
 for $k\to \infty$, finishing the proof.

\end{proof}

\subsection{Proof of Lemma \ref{l:tightness}}
We use the following characterization of tightness following \cite{Karatzas}. Let us introduce the following modulus of continuity for fixed $T>0,\delta>0, \gamma\in C([0,\infty[,M)$:
\begin{equation}\label{d:modofcont} w^{T}_\delta(\gamma) = \sup_{|s-t|<\delta, 0\leq s,t\leq T} d_g(\gamma(t),\gamma(s))\,.\end{equation}
We then have
 \begin{lemma}
 A sequence $\{P^{k}\}_{k\in \N}$ of probability measures on $C([0,\infty[,M)$ is tight if for every $\varepsilon>0$ small enough and every $T>0$ it holds 
 \begin{equation}\label{e:Aldous}
 \lim_{\delta\downarrow 0} \sup_{k\geq 1}P^{k}\left (w^{T}_\delta\geq \varepsilon\right ) =0 \,.
 \end{equation}
 \end{lemma}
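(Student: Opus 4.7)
The plan is to verify Prohorov's tightness criterion directly: for each $\eta > 0$, I would exhibit a relatively compact $K_\eta \subset C([0,\infty[, M)$ with $\sup_k P^k(K_\eta^c) < \eta$. Since the metric $\rho_g$ from \eqref{e:metriconpathspace} induces the topology of uniform convergence on compact intervals and $M$ is itself compact, the Arzel\`{a}--Ascoli theorem characterizes relative compactness in $C([0,\infty[, M)$ by a single clean condition: for every integer $n$, the restrictions $\{\gamma|_{[0,n]} : \gamma \in K\}$ form an equicontinuous family. The pointwise-values condition is automatic because $M$ is compact. So the problem reduces to building a set of paths with a uniform modulus of continuity on every compact time interval.

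To construct $K_\eta$ I would distribute the mass $\eta$ across a countable double index. Fix $\eta > 0$ and an integer $\ell_0$ large enough that $1/\ell_0$ lies below the threshold for which \eqref{e:Aldous} holds. For each $(n, \ell) \in \N \times \{\ell_0, \ell_0+1, \ldots\}$, invoke \eqref{e:Aldous} with $T = n$ and $\varepsilon = 1/\ell$ to pick $\delta_{n,\ell} > 0$ such that
\[
\sup_{k \geq 1} P^k\bigl(w^n_{\delta_{n,\ell}} \geq 1/\ell\bigr) < \eta \cdot 2^{-(n+\ell)}.
\]
Set
\[
K_\eta = \bigl\{\gamma \in C([0,\infty[, M) : w^n_{\delta_{n,\ell}}(\gamma) < 1/\ell \text{ for all } n \in \N \text{ and all } \ell \geq \ell_0\bigr\}.
\]
A union bound over the double index yields $\sup_k P^k(K_\eta^c) < \eta$.

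It remains to verify that $\overline{K_\eta}$ is compact in $C([0,\infty[, M)$. For each $n$, the restrictions of paths in $K_\eta$ to $[0,n]$ are equicontinuous: given any prescribed error $\varepsilon' > 0$, select $\ell \geq \ell_0$ with $1/\ell < \varepsilon'$, and then the common length-scale $\delta_{n,\ell}$ works simultaneously for every $\gamma \in K_\eta$. Combined with compactness of $M$, Arzel\`{a}--Ascoli gives relative compactness of the restriction of $K_\eta$ to each $C([0,n], M)$, and the compact-convergence structure of $\rho_g$ (whose tails decay like $2^{-n}$) lifts this to relative compactness in $C([0,\infty[, M)$. Prohorov's theorem then delivers tightness of $\{P^k\}$. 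I do not anticipate any genuine obstacle here: compactness of $M$ trivializes the value-set hypothesis in Arzel\`{a}--Ascoli, and the Aldous-type estimate \eqref{e:Aldous} feeds the equicontinuity condition directly; the only piece of real content is the diagonal summability trick that packs controls over all scales $1/\ell$ and all horizons $n$ into a single event of prescribed small probability.
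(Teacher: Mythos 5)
Your argument is correct and is essentially the paper's own proof: the paper simply defers to Theorems 2.4.9--2.4.10 of Karatzas--Shreve, whose proof is exactly your construction — a union bound over horizons $n$ and scales $1/\ell$ to build a set of paths with uniform moduli of continuity, Arzel\`{a}--Ascoli (with the pointwise-value condition trivialized by compactness of $M$), and the compact-convergence structure of $\rho_g$. One cosmetic remark: once you exhibit the compact sets $\overline{K_\eta}$ with $\sup_k P^k(K_\eta^c)<\eta$, tightness holds by definition, so the final appeal to Prohorov's theorem is unnecessary — Prohorov is only needed afterwards to pass from tightness to relative compactness, as in Lemma~\ref{l:tightness}.
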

The proof of the preceding lemma is a simple modification of the argument for the case $M=\R$ given in Theorems 2.4.9 and 2.4.10 in \cite{Karatzas}. We now prove Lemma \ref{l:tightness} by showing that \eqref{e:Aldous} holds for the sequence $\{P^{k,x}\}$ of laws of the intrinsic Brownian motions $X^{k,x}$. Since the metric $g_k$ is smooth and the corresponding SDE \eqref{eq:intrinsic} has unique strong solutions given arbirtary initial values, we can assume that the processes $X^{k,x}$ are defined on a common probability space $(\Omega,\mathcal F,\mathbb P)$ and are adapted with respect to a common filtration $\mathcal F_t$. Unwinding the definitions, it is clear that to show \eqref{e:Aldous} we need an estimate on 
\[ \mathbb P(d_g(X^{k,x}_t,X^{k,x}_s)\geq \varepsilon ) \] 
for arbitrary $|s-t|<\delta$ small. To estimate the latter quantity we follow \cite{Ma} (Section 4.3) and introduce, for arbitrary $x\in M, k\in \N, \delta>0$, the exit times 
\begin{equation}\label{d:exit times}
\tau_{\varepsilon}^{k,x} = \inf\{t>0: d_g(X^{k,x}_t,x)>\varepsilon\}\,.
\end{equation}
We claim that
\begin{lemma} \label{l:exittimes} There exists $\varepsilon_M>0$ only depending on $(M,g)$ such that for any $0<\varepsilon<\varepsilon_M$ there exists a constant $C_{\varepsilon}>0$ (depending only on $\varepsilon,M$ and $g$) such that  
\begin{equation}\label{e:exittimeestimate}
\sup_{x\in M}\mathbb{P}( \tau^{k,x}_{\varepsilon}\leq t) \leq C_{\varepsilon} t 
\end{equation}
for any $t\geq 0$ and $k\in \N$ large enough.
\end{lemma}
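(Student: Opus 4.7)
The plan is to use the squared geodesic distance function as a test function in the martingale problem satisfied by the intrinsic Brownian motion $X^{k,x}$ on $(M,g_k)$, and apply optional stopping at the bounded time $\tau^{k,x}_\varepsilon\wedge t$.

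Since $M$ is compact without boundary and $g$ is smooth, the injectivity radius of $(M,g)$ is strictly positive; fix $\varepsilon_M\in(0,\mathrm{inj}(M,g))$. For each $x\in M$ and $0<\varepsilon<\varepsilon_M$, the function $y\mapsto d_g(x,y)^{2}$ is $C^{\infty}$ on the closed geodesic ball $\overline{B_\varepsilon(x)}$. Multiplying by a smooth cutoff supported in $B_{\varepsilon_M}(x)$, we extend it to a function $f_x\in C^{\infty}(M)$ which coincides with $d_g(x,\cdot)^{2}$ on $\overline{B_\varepsilon(x)}$. In normal coordinates around $x$, $d_g(x,\cdot)^{2}$ is simply the Euclidean squared norm, and the compactness of $M$ together with the smoothness of $g$ give a uniform family of such coordinate charts with uniform $C^{\infty}$ bounds on $g$. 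This yields a uniform $C^{2}(\overline{B_\varepsilon(x)})$ bound on $f_x$ depending only on $\varepsilon$, $M$ and $g$.

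Next, since the coordinate expression of $\Delta_{g_k}$ depends on $g_k$ only through its values and first derivatives (cf.\ \eqref{e:laplacianestimate}), the assumed $C^{1}$ convergence $g_k\to g$ combined with the previous estimate yields, for $k\geq k_0$ large enough,
\[
\Lambda_{\varepsilon}:=\sup_{k\geq k_0}\,\sup_{x\in M}\,\bigl\|\Delta_{g_k}f_x\bigr\|_{C^{0}(\overline{B_\varepsilon(x)})}<\infty.
\]
Since $X^{k,x}$ is intrinsic Brownian motion on $(M,g_k)$ and $f_x\in C^{\infty}(M)$, the process $f_x(X^{k,x}_t)-\tfrac{1}{2}\int_0^{t}\Delta_{g_k}f_x(X^{k,x}_s)\,ds$ is an $\mathcal F_t$-martingale. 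Applying optional stopping at the bounded stopping time $\tau^{k,x}_\varepsilon\wedge t$, and noting that $X^{k,x}_s\in\overline{B_\varepsilon(x)}$ for $s\leq \tau^{k,x}_\varepsilon\wedge t$, we obtain
\[
\mathbb E\bigl[f_x(X^{k,x}_{\tau^{k,x}_\varepsilon\wedge t})\bigr]=\tfrac{1}{2}\,\mathbb E\!\left[\int_0^{\tau^{k,x}_\varepsilon\wedge t}\Delta_{g_k}f_x(X^{k,x}_s)\,ds\right]\leq \tfrac{1}{2}\Lambda_{\varepsilon}t.
\]
On the event $\{\tau^{k,x}_\varepsilon\leq t\}$ the continuity of paths gives $d_g(X^{k,x}_{\tau^{k,x}_\varepsilon},x)=\varepsilon$, hence $f_x(X^{k,x}_{\tau^{k,x}_\varepsilon\wedge t})=\varepsilon^{2}$; on the complement, $f_x(X^{k,x}_{\tau^{k,x}_\varepsilon\wedge t})\geq 0$. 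Combining,
\[
\varepsilon^{2}\,\mathbb P\bigl(\tau^{k,x}_\varepsilon\leq t\bigr)\leq \tfrac{1}{2}\Lambda_{\varepsilon}t,
\]
which is \eqref{e:exittimeestimate} with $C_\varepsilon=\Lambda_\varepsilon/(2\varepsilon^{2})$.

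The main technical obstacle is the uniform control of $f_x$ as $x$ varies over $M$: the squared distance function is only smooth inside the injectivity ball, so one must extend it smoothly to $M$ in a way that does not spoil the uniform bounds needed on $\overline{B_\varepsilon(x)}$. This is handled by the compactness of $M$ and the strict positivity of $\mathrm{inj}(M,g)$; the $k$-uniformity then follows automatically from the $C^{1}$ convergence $g_k\to g$ via \eqref{e:laplacianestimate}.
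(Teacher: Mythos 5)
Your argument is correct and follows essentially the same route as the paper: both apply the martingale property to a test function built from $d_g(x,\cdot)$, stop at the bounded time $\tau^{k,x}_\varepsilon\wedge t$, and use the $C^1$ convergence $g_k\to g$ to bound $\Delta_{g_k}$ of the test function uniformly in $x$ and $k$; the paper simply uses a bump $\theta_\varepsilon\circ d_g(x,\cdot)$ (equal to $1$ at $x$ and vanishing at the exit boundary) where you use a truncation of $d_g(x,\cdot)^2$. One caution: in this setting $g$ is only $C^2$, so $d_g(x,\cdot)^2$ is at best $C^2$ near the diagonal (not $C^\infty$, and there are no ``uniform $C^\infty$ bounds on $g$''); this is harmless because $C^2$ regularity of the test function suffices both for the martingale property and for \eqref{e:laplacianestimate}, and the paper sidesteps even this delicacy at the center point by taking the cutoff constant near $x$.
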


We postpone the proof of the preceding lemma and show how to infer \eqref{e:Aldous}. Observe that, since $X^{k,x}$ is Brownian motion on $M$, the coordinate process $\eta =\{\eta_t\}_{t\geq 0}$ on $(C([0,\infty[,M), \mathcal B)$ together with the family of laws $\{P^{k,x}\}_{x\in M}$ is a Markov family. Fixing $T>0$, $\delta>0$, $0\leq s\leq t\leq T$ with $t-s<\delta$, and $\varepsilon<\varepsilon_M$, we can then estimate, using the Markov property 
\begin{align*}
\mathbb P \left ( d_g(X^{k,x}_t,X^{k,x}_s)\geq \varepsilon\right )&= P^{k,x}\left ( d_g(\eta_t,\eta_s)\geq \varepsilon \right ) \\
&= \int_{M}P^{k,x}\left (d_g(\eta_t,\eta_s)\geq \varepsilon \vert \eta_s = y\right )\,d(\eta_s)_{*}P^{k,x}(y) \\
&= \int_{M}P^{k,y}\left ( d_g(\eta_{t-s},y)\geq \varepsilon\right )\,d(\eta_s)_{*}P^{k,x}(y) \\
\end{align*}
But due to \eqref{e:exittimeestimate} we have 
\begin{align*} \sup_{y\in M}P^{k,y}\left ( d_g(\eta_{t-s},y)\geq \varepsilon\right ) &= \sup_{y\in M}\mathbb{P} \left (d_g( X^{k,y}_{t-s},y)\geq \varepsilon\right ) \leq \sup_{y\in M}\mathbb{P} \left (\tau^{k,y}_{\varepsilon}\leq t-s\right )  \\
&\leq C_\varepsilon (t-s)\\
&\leq C_\varepsilon \delta\,,
\end{align*}
which implies \eqref{e:Aldous} for the sequence $\{P^{k,x}\}_{k\in \N}$ and hence also the statement of  Lemma \ref{l:tightness}. We are therefore left to show Lemma \ref{l:exittimes}. 
\begin{proof}[Proof of Lemma \ref{l:exittimes}]
We want to use the property \eqref{e:martingaleprop} for $X^{k,x}$ to gain information about the exit-time of the process $X^{k,x}$. In order to do so we need to construct a suitable testfunction $f$. Since the manifold $(M,g)$ is closed, the injectivity radius is bounded from below by a constant $\varepsilon_M>0$. In particular, for any fixed $x\in M$, the Riemannian distance function $d_g(x,\cdot)$ is $C^{2}$ on the open set $B_\varepsilon(x)\setminus \{x\}$, where $B_\varepsilon(x)$ is the open geodesic ball with radius $\varepsilon$ (this follows for example from the formula (4.2) in \cite{Villani}).  Given $0<\varepsilon<\varepsilon_M$ we let $\theta\in C^{\infty}_c(\R)$ be a cutoff function with $0\leq \theta \leq 1$ on $\R$, $\theta_{\varepsilon}\equiv 1 $ on $]-\frac{\varepsilon}{4},\frac{\varepsilon}{4}[$ and $\theta_{\varepsilon}\equiv 0$ outside $]-\frac{\varepsilon}{2},\frac{\varepsilon}{2}[$. Then the function \[ f^{x}_{\varepsilon} := \theta_{\varepsilon}\circ d_g(x,\cdot) \,\] 
is a $C^{2}$-function on $M$ with support contained in $B_\varepsilon(x)$. As in \eqref{e:laplacianestimate}, we can estimate for $k$ large enough 
\[ \|\Delta_{g_k} f^{x}_{\varepsilon} \|_0 \leq C \|f^{x}_{\varepsilon}\|_2 \leq C_\varepsilon\] 
for a constant $C_\varepsilon$ only depending on $\varepsilon$ and $(M,g)$, but not on $k$ or $x$. Now fix $t\geq 0$. We use that $X^{k,x}$ is the intrinsic Browinan motion on $(M,g_k)$ and the fact that $0\leq f^{x}_\varepsilon \leq 1 $ with $f^{x}_\varepsilon(x) =1$ and $f^{x}_\varepsilon =0$ outside $B_{\frac{\varepsilon}{2}}(x)$ to find
\begin{align*}
\mathbb P( \tau ^{k,x}_{\varepsilon} \leq t) &= 1-\mathbb P( \tau ^{k,x}_{\varepsilon} > t) \\
&\leq 1- \mathbb E\left (\chi_{\{\tau ^{k,x}_{\varepsilon}> t\}}f^{x}_\varepsilon\left(X^{k,x}_{\tau^{k,x}_{\varepsilon}\wedge t} \right )\right )\\
&=1 - \mathbb E\left (f^{x}_{\varepsilon}\left (X^{k,x}_{\tau^{k,x}_{\varepsilon}\wedge t} \right )\right ) +\mathbb E\left (\chi_{\{\tau ^{k,x}_{\varepsilon}\leq t\}}f^{x}_{\varepsilon}\left (X^{k,x}_{\tau^{k,x}_{\varepsilon}\wedge t} \right )\right )\\
&= f^{x}_{\varepsilon}(x)- \mathbb E\left (f^{x}_{\varepsilon}\left (X^{k,x}_{\tau^{k,x}_{\varepsilon}\wedge t} \right )\right )\\
&= -\mathbb E \int_{0}^{\tau^{k,x}_{\varepsilon}\wedge t}\frac{1}{2}\Delta_{g^{k}} f^{x}_{\varepsilon}(X^{k,x}_s)\,ds\leq C_\varepsilon t
\end{align*}
for some constant $C_\varepsilon$ depending only on $M,g$ and $\varepsilon$. This concludes the proof.
\end{proof}
\begin{remark}
\label{r:smooth-g} 
The assumption that $g\in C^2$ is used in the proof of Lemma~\ref{l:exittimes}.
\end{remark}

\subsection{Proof of Corollary \ref{c:markov1}}
Consider the family  $\{P^x\}_{x\in M}$, where $P^x$ is the unique solution to the martingale problem for $\frac{1}{2}\Delta_g$ starting at $x\in M$. By Theorem 5.1 in \cite{Ikeda} it is (strongly) Markovian. In particular, for any $t>s\geq 0$, any set $C\in \mathcal{B}_t$  and any $\Gamma \in\mathcal B(M)$ it holds 
\[ P^x( C\cap \{\gamma:\gamma(t)\in \Gamma\})  = \int_C P^{\gamma'(s)}(\{\gamma: \gamma(t-s)\in \Gamma\}) \,dP^x(\gamma')\,.\]
To show that $\{X^x\}_{t\geq 0}$ is Markovian with transition density given by the heat kernel we need to check that 
\[ \mathbb E [ \chi_\Gamma(X^x_{t+s}) \vert \mathcal F_t] = \int_\Gamma k_g(s, X_t^x,y)d\mathrm{vol}_g(y)\,,\]
for fixed $s >t\geq 0$, $\Gamma\in \mathcal B(M)$
where $\chi_\Gamma$ is the indicator function of $\Gamma$. Observe first that the right hand side equals $v(X^x_t,s)$, where $v(x,t) =\int_\Gamma k_g(t,x,y)\,d\mathrm{vol}_g(y)$ is the solution of the heat equation on $(M,g)$ with initial datum $\chi_\Gamma$. Since $g\in C^2$ it follows by Schauder theory that $v\in C^2$. In particular, $v(X^x_t,s)$ is $\mathcal F_t $ measurable for any $s$. Now observe that $\mathcal F_t$ is generated by sets $(X^x)^{-1}(C)$, where $C\in \mathcal B_t$ is a cylinder set. For such a set $C$ it holds 
\[ \int_{(X^x)^{-1}(C)} \chi_\Gamma(X^x_{t+s})\,d\mathbb P = \int_C \chi_\Gamma\circ \eta_{t+s}  \,dP^x = P^x(C\cap \{\gamma:\gamma(t+s)\in \Gamma\})\,.\]
By the Markov property of $P^x$ it holds 
\begin{align*}
P^x(C\cap \{\gamma:\gamma(t+s)\in \Gamma\}) &= \int_C P^{\gamma'(t)}(\{ \gamma: \gamma(s)\in \Gamma\}) \,dP^x(\gamma') \\ 
&= \int_{(X^x)^{-1}(C)} P^{X^x_t} (\{\gamma:\gamma(s)\in \Gamma\})\,d\mathbb{P}\,.
\end{align*}
On the other hand, from $v\in C^2$ and the martingale property \eqref{e:martingaleprop} it follows as usual that $v(x,s) = \mathbb E [\chi_\Gamma(X^x_s)] =P^x(\{\gamma:\gamma(s)\in \Gamma\})$ for any $x\in M$ and $s\geq 0$. Combining with the above yields 
\begin{align*}
    \int_{(X^x)^{-1}(C)} \chi_\Gamma(X^x_{t+s})\,d\mathbb P &=  \int_{(X^x)^{-1}(C)} P^{X^x_t} (\{\gamma:\gamma(s)\in \Gamma\})\,d\mathbb{P} \\
    &= \int_{(X^x)^{-1}(C)} v(X^x_t,s)\,d\mathbb P\\
    & = \int_{(X^x)^{-1}(C)} \left(\int_\Gamma k_g(s, X_t^x,y)d\mathrm{vol}_g(y)\right) \,d\mathbb{P} \,,
\end{align*}
finishing the proof.

\section{Proof of Theorem \ref{t:renorm}}
Theorem~\ref{t:renorm} is similar in spirit to Theorem~\ref{t:main}. However, we must now use PDE theory to establish the existence of the heat kernels since the pullback metric $u^{\#}e$ is only $C^{0,\alpha}$ for a $C^{1,\alpha}$ embedding. In the proof below, Lemma~\ref{l:heatconvergence}  is used to establish the analogue of equation~\eqref{e:varadhan-integral}. We then use Norris' version of Varadhan's lemma~\cite{Norris} to recover geodesic distance from the short-time asymptotics for heat kernels. But an additional step, Lemma~\ref{l:distanceiso}, is needed to conclude equality of the metrics from equality of geodesic distances. 

This section concludes in Lemma~\ref{l:markov-bm} on the Markov property for Brownian motion on Riemannian manifolds with $C^0$ metric. Its proof uses the change of variables formula for heat kernels given in the preliminary Lemma~\ref{l:heatkernels}.  Lemma~\ref{l:markov-bm} is then used to establish Corollary~\ref{cor:equivalence}.

\subsection{Preliminaries}
\subsubsection{Heat equation on Riemannian manifolds}\label{s:heateq}
Let $h$ be a continuous metric on $M$. Then the Laplace-Beltrami operator $-\Delta_h$ is well defined as an unbounded operator on $L^2(M)$. Recall that for a $v\in W^{1,2}(M)$, the  action on $-\Delta_h v$ on a test function $\varphi\in W^{1,2}(M)$ is given by 
\[ \langle-\Delta_h v, \varphi\rangle = \int_M h(\nabla_h v, \nabla_h \varphi)d\text{vol}_h\,.\]
Classical theory (see e.g. Theorem 4.1 Chapter III in \cite{Lions} and \cite{Sturm}) asserts that for any $f\in L^2(M)$, the Cauchy problem for the heat equation $\partial_t v -\Delta_h v = 0$, $v(0)=f$ has a unique (weak) solution $v\in L^2(]0,\infty[, W^{1,2})$ and that it can be written as 
\[ v(t,x) = \int_M k_h(t,x,y)f(y)d\text{vol}_h(y)\,,\]
where $k_h$ is called the \emph{heat kernel} of $h$. 
It is a smooth function of $(t,x,y)$ when $h$ is smooth. In the proof of Theorem \ref{t:renorm} below we will consider $h= u^\sharp e$, which is only $C^{0,\alpha}$. In particular, the classical formula \eqref{e:BMandheat} does not hold a priori. We therefore need the following lemma.

\begin{lemma}\label{l:heatconvergence} Let $\{h_\varepsilon\}$ be a family of smooth Riemannian metrics on $M$ and $\{k_{h_\varepsilon}\}$ the corresponding heat kernels. Assume that $\|h_\varepsilon-h\|_0 \to 0 $ for $\varepsilon\to 0$. Then for any $f\in L^2(M)$ and any $t>0$, $x\in M$ it holds
\begin{equation}\label{e:heatconvergence}
\lim_{\varepsilon\to 0} \int_M  k_{h_\varepsilon}(t,x,y)f(y)d\mathrm{vol}_{h_\varepsilon}(y) = \int_M k_h(t,x,y)f(y)d\mathrm{vol}_h(y)\,.
\end{equation}
\end{lemma}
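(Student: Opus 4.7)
The plan is to view each integral as the value at $(t,x)$ of the (weak) solution of a Cauchy problem for the heat equation, derive uniform energy estimates, and pass to the limit via the weak formulation. Define
\[
v_\varepsilon(t,x) := \int_M k_{h_\varepsilon}(t,x,y) f(y) \, d\mathrm{vol}_{h_\varepsilon}(y), \qquad
v(t,x) := \int_M k_h(t,x,y) f(y) \, d\mathrm{vol}_h(y),
\]
so that $v_\varepsilon$ is the smooth classical solution of $\partial_t v_\varepsilon - \Delta_{h_\varepsilon} v_\varepsilon = 0$ with $v_\varepsilon(0,\cdot) = f$, while $v$ is the unique weak solution of the corresponding Cauchy problem for $h$ (from Subsection \ref{s:heateq}). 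Because $h$ is a continuous positive-definite metric on the compact manifold $M$, the hypothesis $\|h_\varepsilon - h\|_0 \to 0$ implies that, for all sufficiently small $\varepsilon$, the metrics $h_\varepsilon$ are uniformly elliptic with two-sided bounds $\lambda\,\mathrm{Id} \le h_\varepsilon \le \Lambda\,\mathrm{Id}$ independent of $\varepsilon$, and the volume forms $d\mathrm{vol}_{h_\varepsilon}$ converge uniformly to $d\mathrm{vol}_h$.

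Testing the heat equation against $v_\varepsilon$ and invoking the parabolic maximum principle yields uniform bounds
\[
\|v_\varepsilon\|_{L^\infty([0,T]\times M)} \le \|f\|_{L^\infty}, \qquad
\sup_{t\in[0,T]} \|v_\varepsilon(t)\|_{L^2(M)}^2 + \int_0^T\!\!\int_M |\nabla v_\varepsilon|^2_{h_\varepsilon} \, d\mathrm{vol}_{h_\varepsilon}\, dt \le C \|f\|_{L^2}^2,
\]
independent of $\varepsilon$. By weak compactness, some subsequence $v_{\varepsilon_j}$ converges weakly in $L^2([0,T]; W^{1,2}(M))$ to some $v^\star$. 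The weak formulation of the Cauchy problem involves pairings of $v_{\varepsilon_j}$ or $\nabla v_{\varepsilon_j}$ (converging weakly in $L^2$) against smooth test functions multiplied by the coefficients $h_{\varepsilon_j}^{ij}\sqrt{|h_{\varepsilon_j}|}$ (converging uniformly, by $\|h_\varepsilon - h\|_0 \to 0$). Weak-strong pairing then allows passage to the limit, identifying $v^\star$ as a weak solution of the Cauchy problem for $\Delta_h$ with initial datum $f$. The uniqueness of the weak solution cited in Subsection \ref{s:heateq} forces $v^\star = v$, so the full family $v_\varepsilon$ converges weakly to $v$.

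It remains to upgrade weak convergence to convergence at the fixed point $(t,x) \in (0,\infty) \times M$. The uniform ellipticity of $\{h_\varepsilon\}$ places each equation for $v_\varepsilon$ in the scope of the De Giorgi--Nash--Moser theory for divergence-form parabolic equations with bounded measurable coefficients; this supplies a uniform-in-$\varepsilon$ H\"older bound for $v_\varepsilon$ on every set $[t_0,T] \times M$ with $t_0 > 0$. Arzel\`a--Ascoli then extracts a uniformly convergent subsequence whose limit must coincide with $v$ by uniqueness, so in fact the full family converges uniformly on $[t_0,T]\times M$, and in particular pointwise at $(t,x)$.

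The main obstacle is the low regularity of $h$: there is no classical formulation of $\Delta_h$ on smooth functions and, unlike in Theorem \ref{t:intrinsicBMlowreg}, no probabilistic representation of $v$ is available, so the entire argument must proceed through the weak formulation. The hypothesis $\|h_\varepsilon - h\|_0 \to 0$ is precisely what is needed to secure both the uniform ellipticity and the convergence of the coefficients in the bilinear form, tying the two ingredients together; a more delicate point, which one must check carefully in the weak-limit step, is that the uniform ellipticity rather than any Sobolev control on $h_\varepsilon$ suffices, since only first derivatives of test functions appear.
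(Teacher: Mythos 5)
Your proposal is correct and follows essentially the same route as the paper: uniform energy estimates giving weak $L^2([0,T];W^{1,2})$ compactness, passage to the limit in the weak formulation with identification via uniqueness of weak solutions, and then De Giorgi--Nash--Moser plus Arzel\`a--Ascoli to upgrade to uniform convergence away from $t=0$. The only cosmetic differences are your extra maximum-principle $L^\infty$ bound (not needed in the paper's version) and your more explicit weak-strong pairing remark for the coefficient convergence.
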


\begin{proof} The function $v_\varepsilon (t,x) =  \int_M  k_{h_\varepsilon}(t,x,y)f(y)d\mathrm{vol}_{h_\varepsilon}(y)$ is the unique smooth solution of the heat equation $\partial_t v_\varepsilon - \Delta_{h_\varepsilon} v_\varepsilon = 0$ on $M$ with $v_\varepsilon(0,\cdot) = f$. Testing the equation with $v_\varepsilon$ and integrating yields a uniform $L^\infty((0,T), L^2(M))\cap L^2((0,T), W^{1,2}(M)) $ bound for $\{v_\varepsilon\}$ and therefore a weakly in $L^2((0,T), W^{1,2}(M))$ converging subsequence. Passing to the limit in the weak formulation of the heat equation, we find (using the uniqueness of weak solutions) that the limit is $v(t,\cdot)$. On the other hand, the coordinate expression of $v_\varepsilon$ in some fixed chart is a smooth solution to 
\[ \partial_t \left( \sqrt{|h_\varepsilon|} v_\varepsilon\right) - \partial_i\left( \sqrt{|h_\varepsilon|} h_\varepsilon^{ij} \partial_j v_\varepsilon\right)  =0 \]
in some ball $B_r(0)\subset\R^n$ (recall that $|h| = \det h$). Due to the convergence assumption and the compactness of $M$ there exist $0<\lambda<\Lambda <+\infty$ such that for all $\varepsilon>0$ small enough it holds
 \[\lambda  \leq \sqrt{|h_\varepsilon|}\leq \Lambda\]
 and \[ \lambda Id \leq  \sqrt{|h_\varepsilon|} h_\varepsilon ^{ij}\ \leq \Lambda Id\,.\] 
The De Giorgi--Nash--Moser theorem (see Theorem 18 in \cite{Vasseur} for our setting) then implies a uniform (in $\varepsilon$) $C^{0,\alpha}([s,T]\times \bar B_{\frac{r}{2}}) $ - bound for the functions $v_\varepsilon$ for all $s>0$. By the theorem of Arzel\`a-Ascoli, a subsequence of $v_\varepsilon$ therefore converges uniformly in $(t,x)$. By the above weak convergence, the limit must be $v$, which shows the claim. 
\end{proof}

Now  let $u:M\to \R^q$ be a $C^1$-regular embedding and consider the submanifold $\Sigma = u(M)$. Consider the Riemannian metrics $u^\sharp e$ on $M$ and $\iota^\sharp e$ on $\Sigma$, where $\iota : \Sigma \hookrightarrow \R^q$ is the inclusion. Both metrics are continous and therefore induce heat kernels $k_{u^\sharp e}, k_{\iota^\sharp e}$. The relation between the two is given in the next lemma. 
\begin{lemma}\label{l:heatkernels} For any $f\in L^2(M)$ and any $t>0$ it holds
\[\int_M k_{u^\sharp e}(t,x,y)f(y) d\mathrm{vol}_{u^\sharp e}(y)= \int_\Sigma k_{\iota^\sharp e} (t,u(x),z) f(u^{-1}(z))\,d\mathrm{vol}_{\iota^\sharp e}(z)\,.\]
\end{lemma}
\begin{proof} This follows from a change of variables. Indeed, fix $f\in L^2(M)$ and define $v(x,t) =\int_\Sigma k_{\iota^\sharp e} (t,u(x),z) f(u^{-1}(z))\,d\mathrm{vol}_{\iota^\sharp e}(z)$. We claim that $v$ is a weak solution to the heat equation on $(M,u^\sharp e) $ with initial datum $f$. Indeed, fix $\varphi \in C^\infty(M)$ and observe by that the change of variables 
 \begin{align*}
     \int_M \partial_tv\, \varphi \,d\mathrm{vol}_{u^\sharp e}+ &\int_M u^\sharp e(\nabla_{u^\sharp e} v,\nabla_{u^\sharp e} \varphi) d\mathrm{vol}_{u^\sharp e} =\\
     &\int_\Sigma \partial_t v\circ u^{-1} \varphi\circ u^{-1}\,d\mathrm{vol}_{\iota^\sharp e} + \int_\Sigma u^\sharp e\vert_{u^{-1}}(\nabla_{u^\sharp e} v,\nabla_{u^\sharp e} \varphi) \mathrm{vol}_{\iota\sharp e}\,.
 \end{align*}
Now fix any $p\in M$ and local coordinates $(x^1,\ldots,x^n)$ around $p$. This induces local coordinates $y^i:= x^i\circ u^{-1}$ on $\Sigma $ around $u(p)$. Notice that  $\frac{\partial}{\partial y^i} = u_\sharp \frac{\partial}{\partial x^i}$. Consequently, 
\[ \iota^\sharp e_{u(p)} = u^\sharp e_p(\frac{\partial}{\partial x^i},\frac{\partial}{\partial x^j}) dy^idy^j \,.\]
Therefore
 \begin{align*} u^\sharp e\vert_p (\nabla_{u^\sharp e} v,\nabla_{u^\sharp e} \varphi) &= (u^\sharp e)^{ij}(p) \frac{\partial}{\partial x^i}\big\vert_p (v) \frac{\partial}{\partial x^j}\big\vert_p (\varphi) \\
 &= (\iota^\sharp e)^{ij}(u(p)) \frac{\partial}{\partial y^i}\big\vert_{u(p)} (v\circ u^{-1}) \frac{\partial}{\partial y^j}\big\vert_{u(p)} (\varphi\circ u^{-1})\, \\
 &= \iota^\sharp e\vert_{u(p)}(\nabla_{\iota^\sharp e} (v\circ u^{-1}), \nabla_{\iota^\sharp e} (\varphi\circ u^{-1}))\,.
 \end{align*}

 But $\tilde v(y,t):= v(u^{-1}(y),t)$ is (by definition) the unique weak solution of the heat equation on $(\Sigma, \iota^\sharp e)$ with initial datum $f\circ u^{-1}\in L^2(\Sigma)$. Since $\tilde \varphi := \varphi\circ u^{-1}$ is a valid testfunction it follows from the above
 \begin{align*}
     \int_M \partial_tv\, \varphi \,d\mathrm{vol}_{u^\sharp e}+ &\int_M u^\sharp e(\nabla_{u^\sharp e} v,\nabla_{u^\sharp e} \varphi) d\mathrm{vol}_{u^\sharp e} =\\
     &\int_\Sigma \partial_t \tilde v\tilde \varphi \,d\mathrm{vol}_{\iota^\sharp e} +\int_\Sigma \iota^\sharp e(\nabla_{\iota^\sharp e} \tilde v,\nabla_{\iota^\sharp e} \tilde \varphi) \mathrm{vol}_{\iota\sharp e}\, = 0\,.
 \end{align*}
 By another change of variables argument it follows that $\lim_{t\to 0} v(\cdot,t ) =f$ in $L^2(M)$, which finishes the proof due to uniqueness of solutions. \end{proof}
\subsubsection{Recovering Riemannian metrics from their distance function}
In the proof of Theorem \ref{t:main} we used that one can recover the Riemannian metric from its distance function by differentiation. This proof relies on the regularity assumption that the metric is twice differentiable. In the proof of Theorem \ref{t:renorm} we only know a priori that the metric $u^\sharp e$ is (H\"older-)continuous. The following Lemma is necessary to close this gap. 

\begin{lemma} 
\label{l:distanceiso}
Let $g\in C^2$ and $h\in C^0$ be two Riemannian metrics on $M$.
Assume that for any $x\in M $ it holds 
\begin{equation}\label{e:equidist}d_g(x,y)= d_h(x,y)\end{equation}
for all $y$ in a $g$-geodesic neighborhood of $x$. Then $g=h$. 
\end{lemma}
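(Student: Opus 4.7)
The plan is to recover the pointwise quadratic form $h(x)$ at each $x \in M$ directly from $d_h$, using $g$-geodesics as test curves. Since $g \in C^2$, the exponential map $\exp^g_x$ is well-defined and $C^2$ on a small ball in $T_x M$, and the resulting radial geodesics are $g$-minimizing; meanwhile the hypothesis transfers the equality $d_g = d_h$ onto these curves. The low regularity of $h$ prevents a direct differentiation argument, so the idea is to compare $d_h$ locally to the Euclidean distance induced by the frozen quadratic form $h(x)$.

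Fix $x \in M$ and $v \in T_x M$. Set $\gamma(t) = \exp^g_x(tv)$ and pick $t_0 > 0$ so small that $\gamma([0,t_0))$ lies inside the $g$-geodesic neighborhood of $x$ on which the hypothesis applies and $\gamma$ is $g$-minimizing. Then
\[
  d_h(x,\gamma(t)) \;=\; d_g(x,\gamma(t)) \;=\; t\,|v|_g \qquad \text{for } 0 \le t < t_0.
\]
The problem reduces to showing $\lim_{t \to 0^+} d_h(x,\gamma(t))/t = |v|_h$, for then $|v|_h = |v|_g$.

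For the upper bound, $d_h(x,\gamma(t)) \le L_h(\gamma|_{[0,t]}) = \int_0^t |\gamma'(s)|_{h(\gamma(s))}\,ds$, and the integrand converges uniformly to $|v|_h$ as $t \to 0^+$ by continuity of $h$ together with $\gamma'(0) = v$. For the lower bound, I work in local coordinates centered at $x = 0$. Given $\epsilon > 0$, continuity of $h$ yields a coordinate ball $B_r$ on which the matrix inequality $h_{ij}(y) \ge (1-\epsilon)\,h_{ij}(0)$ holds as quadratic forms. Then for any smooth curve $\sigma \subset B_r$ from $0$ to $p$,
\[
  L_h(\sigma) \;\ge\; \sqrt{1-\epsilon}\,L_{h(0)}(\sigma) \;\ge\; \sqrt{1-\epsilon}\,|p|_{h(0)},
\]
where $|\cdot|_{h(0)}$ denotes the norm induced by the constant positive-definite form $h_{ij}(0)$ (which is Euclidean in coordinates, so the straight line minimizes). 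Since $d_h(x,\gamma(t)) = t|v|_g$ is small for small $t$, any competitor curve whose length is within a constant factor of the infimum must be confined to $B_r$; hence $d_h(x,\gamma(t)) \ge \sqrt{1-\epsilon}\,|\gamma(t)|_{h(0)}$. Because $\gamma(t) = tv + O(t^2)$ in coordinates, $|\gamma(t)|_{h(0)}/t \to |v|_{h(0)} = |v|_h$, which gives $\liminf_{t\to 0^+} d_h(x,\gamma(t))/t \ge \sqrt{1-\epsilon}\,|v|_h$. As $\epsilon$ is arbitrary, the limit equals $|v|_h$.

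Comparing with $d_h(x,\gamma(t))/t \equiv |v|_g$ yields $|v|_h = |v|_g$ for every $v \in T_x M$; polarization of the associated symmetric bilinear forms then gives $h(x) = g(x)$, and since $x$ was arbitrary, $h = g$. I expect the main technical subtlety to be the confinement step in the lower bound: without higher regularity of $h$, one must argue that near-minimizing paths for $d_h(x,\gamma(t))$ cannot escape $B_r$, which works because the hypothesis forces $d_h(x,\gamma(t))$ to be of order $t$ while any curve exiting $B_r$ from $x$ has $h$-length bounded below by a positive constant.
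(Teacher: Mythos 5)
Your argument is correct, and it takes a genuinely different route from the paper's. The paper first produces an $L_h$-minimizing curve $\bar\gamma$ between $x$ and a nearby point via Arzel\`a--Ascoli, shows by a triangle-inequality/betweenness argument (using uniqueness of $g$-minimizers) that $\bar\gamma$ must be a reparametrization of the $g$-geodesic $\gamma$, and then reads off $\int_0^s|\gamma'|_g = L_h(\gamma|_{[0,s]}) = \int_0^s|\gamma'|_h$ before dividing by $s$. You instead squeeze $d_h(x,\gamma(t))/t$ directly: the upper bound from the $h$-length of $\gamma$ itself, and the lower bound by freezing the coefficients of $h$ at $x$ ($h \ge (1-\epsilon)h(0)$ on a small coordinate ball) together with the confinement observation that competitors leaving the ball have $h$-length bounded below by a positive constant while the infimum is $O(t)$. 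This avoids both the existence of $L_h$-minimizers for a merely continuous metric and their identification with $g$-geodesics, which is the structural heart of the paper's proof; in exchange the paper's argument yields the stronger geometric fact that $h$-minimizers coincide with $g$-geodesics, which your blow-up argument does not see. Both proofs use $g\in C^2$ only through the existence of unique minimizing radial $g$-geodesics with $\gamma(t)=tv+o(t)$, and both finish by polarization. The one step worth writing out carefully in your version is the confinement estimate, exactly as you flag: the portion of any competitor up to its first exit from $B_r$ already has $h$-length at least $\sqrt{1-\epsilon}\,\mathrm{dist}_{h(0)}(0,\partial B_r)>0$, so for $t$ small the infimum is unchanged by restricting to curves inside $B_r$.
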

As usual, the distance function $d_h(x,y)$ is defined by 
\[ d_h(x,y) = \inf_\gamma \int_0^1|\gamma'|_h\,dt =:\inf_\gamma L_h(\gamma)\,,\] where the infimum is taken over all piecewise smooth curves $\gamma:[0,1]\to \M$ with $\gamma(0)=x,\gamma(1)=y$. 
\begin{proof}
Let $x\in M$, $y$ in a $g$-geodesic neighborhood of $x$ and let $\gamma:[0,L]\to M$ be the unique $g$-geodesic connecting $x$ and $y$. Using the theorem of Arzel\`a-Ascoli we can find a piecewise smooth curve $\bar \gamma:[0,1]\to M $ connecting $x $ and $y$ which minimizes the length $L_h$ (see also Section 2.1 in \cite{Saemann} and references therein). We claim that $\bar \gamma $ is a reparametrization of $\gamma$. Fix $s\in (0,1)$ and observe that  
\begin{align*}
    d_g(x,y) &\leq d_g(x,\bar\gamma (s))+d_g(y,\bar \gamma(s))  = d_h(x,\bar\gamma (s))+d_h(y,\bar \gamma(s))\\ 
    &\leq \int_0^s |\bar \gamma'|_h\,dt  +\int_s^1 |\bar \gamma'|_h\,dt  = L_h(\bar \gamma) =d_h(x,y)=d_g(x,y)\,,
\end{align*} 
i.e., $ d_g(x,y)= d_g(x,\bar \gamma(s)) + d_g(y,\bar \gamma(s))$. But this implies that $\bar \gamma (s) = \gamma(t(s)) $ for some $t(s)\in [0,L]$.  Indeed, connecting  $x$ and $\bar\gamma(s)$ by the unique $g$-geodesic and following the unique $g$-geodesic from $\bar \gamma(s)$ to $y$ yields a $d_g$-minimizing curve connecting $x,y$, implying that the curve is $\gamma$ by uniqueness of $d_g$-minimizing curves. 

Now fix $x\in M$ and $X\in T_xM$ and let $\gamma:(-1,1)\to M$ be a $g$-geodesic with $\gamma(0)=x$ and $\gamma'(0)=X$. Fix $T<1 $ such that $\gamma(T) $ belongs to a geodesic neighborhood of $x$. Let then $\bar \gamma:[0,1]\to M$ be a $L_h$-minimizing curve connecting $x$ and $\gamma(T)$, and fix $t\in (0,T)$. By the above reasoning it holds $\gamma(t)= \bar\gamma(s(t))$ for some $s(t)\in (0,1)$ and hence, using \eqref{e:equidist} and the indepence of the length of a curve with respect to reparametrizations, 
\begin{align}
    \int_0^s |\gamma'|_g\,dt &= d_g(x,\gamma(s)) = d_h(x,\bar\gamma(t(s))) = L_h(\bar\gamma\vert_{[0,t(s)]}) = L_h(\gamma\vert_{[0,s]}) \\
    &= \int_0^s |\gamma'|_h\,dt\,.
\end{align} 
Dividing by $s$ and letting $s$ approach zero then yields $|X|_g = |X|_h$, from which the claim follows. 
 \end{proof}

\subsubsection{Mollifying embeddings}
For $\varepsilon>0$ let $u^\varepsilon: M \to \R^q$ denote the mollified map $u^\varepsilon= u* \varphi_\varepsilon$ as defined in Subsection \ref{s:mollificationonmanifolds}.
\begin{lemma}
\label{lem:smoothing}
There exists $\varepsilon_*>0$ such that $u^\varepsilon$ is an embedding for $0 < \varepsilon < \varepsilon_*$.   
\end{lemma}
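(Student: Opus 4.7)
Since $M$ is compact, any continuous injective map from $M$ to $\R^q$ that is also an immersion is automatically an embedding. Thus it suffices to establish, for $\varepsilon$ small, that (i) $u^\varepsilon$ is an immersion, and (ii) $u^\varepsilon$ is injective. The underlying input that makes both parts work is the $C^1$ convergence $u^\varepsilon \to u$, which follows directly from the mollification estimates of Lemma~\ref{l:mollification} (applied chart-by-chart as in Subsection~\ref{s:mollificationonmanifolds}) together with the fact that $u\in C^{1,\alpha}$.

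For the immersion property, I would use that $du_x : T_xM \to \R^q$ is injective for every $x\in M$ since $u$ is an embedding. Compactness of the unit sphere bundle $SM$ together with continuity of $x \mapsto du_x$ gives a constant $c_0>0$ such that $|du_x(v)|_e \geq c_0 |v|_g$ for all $(x,v)\in SM$. The $C^1$ convergence $u^\varepsilon \to u$ then forces $|du^\varepsilon_x(v)|_e \geq \tfrac{c_0}{2}|v|_g$ uniformly in $(x,v)$ once $\varepsilon < \varepsilon_1$.

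For injectivity, I would split into a near-diagonal regime and a far regime. Using the quantitative inverse function theorem applied in coordinate charts, together with compactness, the lower bound from the previous paragraph yields $\delta>0$ and $c_1>0$ such that $|u(x)-u(y)| \geq c_1 d_g(x,y)$ whenever $d_g(x,y)<\delta$. By the same $C^1$-stability argument, the analogous bound $|u^\varepsilon(x)-u^\varepsilon(y)| \geq \tfrac{c_1}{2} d_g(x,y)$ holds for $\varepsilon<\varepsilon_2$. In the far regime $d_g(x,y)\geq\delta$, the set $K_\delta := \{(x,y)\in M\times M : d_g(x,y)\geq \delta\}$ is compact, so by injectivity of $u$ there exists $\eta>0$ with $|u(x)-u(y)|\geq \eta$ on $K_\delta$. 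Uniform convergence $\|u^\varepsilon - u\|_0 \to 0$ then gives $|u^\varepsilon(x)-u^\varepsilon(y)| \geq \eta - 2\|u^\varepsilon-u\|_0 \geq \tfrac{\eta}{2}$ on $K_\delta$ for $\varepsilon<\varepsilon_3$.

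Setting $\varepsilon_* := \min(\varepsilon_1,\varepsilon_2,\varepsilon_3)$ combines these facts: for every $\varepsilon\in(0,\varepsilon_*)$, the map $u^\varepsilon$ is a smooth injective immersion of the compact manifold $M$, hence an embedding. I do not expect any real obstacle here; the only subtlety is that the mollification is defined via charts and partition of unity, but the required quantitative immersion and bi-Lipschitz estimates are all local in nature and transfer without issue, and the $C^{1,\alpha}$ regularity of $u$ is exactly what is needed for the convergence $u^\varepsilon \to u$ to take place in $C^1$.
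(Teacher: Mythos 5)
Your argument is correct and follows essentially the same route as the paper: a uniform lower bound on $|Du_p\xi|$ plus $C^1$ convergence of $u^\varepsilon$ gives the immersion property, and injectivity is obtained by the same near/far splitting, with compactness handling the far regime and a quantitative local bi-Lipschitz estimate handling the near regime. The only cosmetic difference is that you first establish the near-diagonal bound for $u$ and transfer it to $u^\varepsilon$ by $C^1$ stability, whereas the paper proves it directly for $u^\varepsilon$ via the mean value theorem and the modulus of continuity of $Du^\varepsilon$; both are fine.
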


\begin{proof} Fix a finite atlas for $M$. Since $u$ is an immersion and $M$ is compact, there exists $\eta>0$ such that $|Du_p\xi|\geq5\eta $ for all $p\in M$ and $\xi \in \mathbb S^{n-1}$. We can estimate
\begin{equation}\label{e:immersion}|Du^\varepsilon_p\xi| \geq 5\eta -\|u-u^\varepsilon\|_1\geq 4\eta\,,\end{equation}
which shows that $u^\varepsilon$ is an immersion for $\varepsilon$ small enough, since $\|u-u^\varepsilon\|_1\leq C\varepsilon^\alpha\|u\|_{1,\alpha}$ by Lemma \ref{l:mollification}. It remains to show that $u^\varepsilon$ is injective for $\varepsilon$ small enough. First of all, since $u\in C^1$, there exists $\rho>0$ such that $|Du_p- Du_q| \leq \eta$ for all $d_g(p,q)<\rho$. Without loss of generality we assume $\rho< \text{inj}(M)$. Since $u$ is an embedding, there is $\delta>0$ such that $|u(p)-u(q)|\geq 3\delta $ for $d_g(p,q)\geq \rho$. For such $p,q$ we therefore find 
\[|u^\varepsilon(p)-u^\varepsilon(q)|\geq 3\delta- 2\|u-u^\varepsilon\|_0 \geq \delta\]
for $\varepsilon$ small enough. On the other hand, fix $d_g(p,q)<\rho$ and let $x,y$ denote the coordinates of $p,q$. By the mean value theorem we find 
\begin{align*}|u^\varepsilon(x)-u^\varepsilon(y) - Du^\varepsilon(x)(x-y)|&\leq |\left(Du^\varepsilon(x+\tau(y-x))-Du^\varepsilon(x)\right)||x-y| \\
&\leq  3\eta|x-y|
\end{align*}
for $\varepsilon$ small enough. Therefore, using \eqref{e:immersion},
\[|u^\varepsilon(x)-u^\varepsilon(y)| \geq |Du^\varepsilon(x)(x-y)|-3\eta|x-y|\geq \eta|x-y|\]
for $\varepsilon$ small enough, which yields the claim. 
\end{proof}
\subsection{Proof of Theorem \ref{t:renorm}} Let $u$ be isometric.
As in the proof of Theorem \ref{t:intrinsicBMlowreg} we can assume that the processes $X^{\varepsilon}_t$ are defined on a common probability space $(\Omega,\mathcal F, \mathbb{P})$, and hence also  $Y^{\varepsilon}_t=u^{\varepsilon}(X^{\varepsilon}_t)$. The law of $Y^{\varepsilon}$ is a probability measure on the path space $C([0,\infty[, \R^{q})$ (which is equipped with the metric $\rho_e$ defined as in \eqref{e:metriconpathspace}). Fix therefore $f:C([0,\infty[, \R^{q}) \to \R$ continuous and bounded. Then 
 \[ \int_{C([0,\infty[, \R^{q})} f(\gamma) \,d Y^{\varepsilon}_{*} \mathbb P(\gamma) = \int_{C([0,\infty[, M)} f\circ u^{\varepsilon}(\gamma) \,dX^{\varepsilon}_{*}\mathbb P(\gamma)\,.\]
Because $u$ is isometric and $\alpha>\frac{1}{2}$ it follows from Lemma \ref{l:christoffels} and Theorem \ref{t:intrinsicBMlowreg} that $X^{\varepsilon}_t$ converges in law to $X_t$ for $\varepsilon \to 0$. Hence, if we show that  $f\circ u^{\varepsilon}$ converges uniformly on compact subsets of $C([0,\infty[, M)$ to $f\circ u$, it will follow (see e.g. Problem 2.4.12 in \cite{Karatzas}) 
\begin{align*}\lim_{\varepsilon\downarrow 0} \int_{C([0,\infty[, \R^{q})} f(\gamma) \,d Y^{\varepsilon}_{*} \mathbb P(\gamma) &= \int_{C([0,\infty[, M)} f\circ u(\gamma) \,dX_{*}\mathbb P(\gamma) \\
&=  \int_{C([0,\infty[, \R^{q})} f(\gamma) \,d Y_{*} {\mathbb P}(\gamma)\,,\end{align*}
as required. Thus, fix any compact subset $K\subset C([0,\infty[,M)$ and a sequence $\varepsilon_i\downarrow 0$. Observe that $u(K)$ is compact in $C([0,\infty[,\R^q)$ (equipped with the usual topology of compact convergence) thanks to the estimate
\[ |u(\gamma_i(t))-u(\gamma(t))|\leq C[u]_1d_g(\gamma_i(t),\gamma(t))\]
if $d_g(\gamma_i(t),\gamma(t))$ is smaller than the injectivity radius of $(M,g)$. The same holds for the set $u^\varepsilon(K)$ for any $\varepsilon>0$. Since moreover 
\[ |u^\varepsilon(\gamma_i(t))-u(\gamma(t))|\leq \|u^\varepsilon-u\|_0 + C[u]_1d_g(\gamma_i(t),\gamma(t))\]
it follows that also the set $\tilde K=\bigcup_{i=1}^\infty u^{\varepsilon_i}(K)$ is compact. As a continuous function, $f$ is uniformly continuous on $\tilde K$ and given $\delta>0$ we find $\eta>0$ such that $|f(z)-f(w)|<\delta$ for any $z,w\in \tilde K$ with $ \rho_e (z,w)<\eta$. Since $\|u^\varepsilon-u\|_0 \leq C\varepsilon [u]_1$ by Lemma \ref{l:mollification}, we deduce that for any $T>0$
\[ \sup_{\gamma\in K}\sup_{0\leq t\leq T}|u_{\varepsilon_i}(\gamma(t))-u(\gamma(t))| \leq  C\varepsilon [u]_1<\eta\]
for any $i$ large enough, i.e., for such $i$ it holds $\tilde \rho (u_{\varepsilon_i}\circ\gamma,u\circ \gamma)<\eta$ for any $\gamma\in K$. This yields 
\[ \sup_{\gamma\in K} |f\circ u_{\varepsilon_i} (\gamma)- f\circ u (\gamma)| < \delta\] 
for large enough $i$,, which shows the claim. 

Conversely, assume that $Z^{\varepsilon,x}_t$ converges in law to $Y^x_t$ for any $x\in M$. As in the proof of Theorem \ref{t:main} we consider the processes $\bar Z^{\varepsilon,x}_t = u^{-1}(Z^{\varepsilon,x}_t)$, which are Brownian motions on $(M,(u^\varepsilon)^\sharp e)$ starting at $x$. Since the metric $h_e:=(u^\varepsilon)^\sharp e$ is smooth it holds 
\[\mathbb E[ f(\bar Z^{\varepsilon,x}_t)] = \int_M k_{h_\varepsilon}(t,x,y)f(y)d\mathrm{vol}_{h_\varepsilon}(y)\]
 for any $f\in C^\infty(M)$. By assumption, $\bar Z^{\varepsilon,x}_t$ converges in law to $u^{-1}(Y_t^x) = X^x_t$ which is Brownian motion on $(M,g)$. Consequently, 
 \[\lim_{\varepsilon\to 0 } \int_M k_{h_\varepsilon}(t,x,y)f(y)d\mathrm{vol}_{h_\varepsilon}(y) = \int_M k_g(t,x,y)f(y)d\mathrm{vol}_{g}(y)\,.\]
 On the other hand, $h_\varepsilon \to u^\sharp e$ in $C^{0,\alpha}$. We can therefore apply Lemma \ref{l:heatconvergence} to deduce that in fact 
 \begin{align*}\int_M k_g(t,x,y)f(y)d\mathrm{vol}_{g}(y)&= \lim_{\varepsilon\to 0 } \int_M k_{h_\varepsilon}(t,x,y)f(y)d\mathrm{vol}_{h_\varepsilon}(y) \\
 &= \int_M k_{u^\sharp e}(t,x,y)f(y)d\mathrm{vol}_{u^\sharp e}(y)\,\end{align*}
 for any $f\in C^\infty(M)$. As in the proof of Theorem \ref{t:main} we can localize and invoke Varadhan's lemma (more precisely the low-regularity version due to \cite{Norris}) to find $ d_g(x,y)=d_{u^\sharp e}(x,y)$ for all $x,y\in M$ close enough. We conclude by appealing to Lemma \ref{l:distanceiso}.

 \subsection{Markov property and Brownian motion on Riemannian manifolds with irregular metric}\label{s:Markovproperty} Let $(N,h)$ be a smooth closed Riemannian manifold with continuous metric $h$. Since $h$ is only continuous we cannot define the martingale problem for $\frac{1}{2}\Delta_h$ since (as observed in section \ref{s:heateq}) $\Delta_h$ is only well defined as a distribution. However, the Cauchy problem for the heat equation can be solved uniquely with the help of the heat kernel $k_h$. For $x\in N$ we define a Brownian motion on $(N,h)$ starting at $x$ to be a Markov process $\{X_t\}_{t\geq 0}$ on some filtered probability space $(\Omega, \mathcal F, \mathcal F_t,\mathbb P)$ with transition density given by the heat kernel $k_h$ and for which $\mathbb P( X_0 =x ) =1 $. 

 Now let $M$ be a smooth closed manifold and consider a $C^1$-regular embedding
$u:M\to \R^q$. We then have
 \begin{lemma} 
 \label{l:markov-bm} Assume $\{X_t\}_{t\geq 0}$ is a Brownian motion on $(M,u^\sharp e)$ starting at $x\in M$. Then $Y_t := u(X_t)$ is a Brownian motion on $(u(M), \iota^\sharp e)$.
 \end{lemma}

 \begin{proof}
Let $\Gamma\subset \Sigma$ be a Borel subset, $s>t\geq 0$. We need to show that 
\[\mathbb E[ \chi_\Gamma(Y_{t+s})\vert \mathcal F_t ] = \int_\Gamma k_{\iota^\sharp e} (s,Y_t,y)\,d\mathrm{vol}_{\iota^\sharp e}(y)\,.\]
This follows from the Markov property of $X_t$ and Lemma \ref{l:heatkernels}. Indeed, firstly, since $z\mapsto v(z,s) = \int_\Gamma k_{\iota^\sharp e} (s,z,y)\,d\mathrm{vol}_{\iota^\sharp e}(y) $ is continuous for any $s>0$ (see proof of Lemma \ref{l:heatconvergence}) and $Y_t= u(X_t)$, the right hand side, which is given by $v(u(X_t),s)$, is $\mathcal F_t$ measurable for any $s$. Now fix $A\in \mathcal F_t$. Then by Lemma \ref{l:heatkernels} and the Markov property of $X_t$ it follows 
\begin{align*}
&\int_A \left( \int_\Sigma k_{\iota^\sharp e} (s,Y_t,y)\chi_\Gamma(y)\,d\mathrm{vol}_{\iota^\sharp e}(y)\right) d\mathbb P \\
&\quad \quad \quad  =  \int_A \left( \int_M k_{u^\sharp e} (s,X_t,y)\chi_{u^{-1}(\Gamma)}(y)\,d\mathrm{vol}_{u^\sharp e}(y)\right)d\mathbb P \\
&\quad \quad \quad = \int_A \chi_{u^{-1}(\Gamma)} (X_{t+s})d\mathbb P \\
&\quad \quad \quad = \int_A \chi_\Gamma (Y_{t+s})d\mathbb P\,.\qedhere
\end{align*}

We combine Lemma~\ref{l:markov-bm}, Corollary \ref{c:markov1} and the first part of Theorem~\ref{t:renorm} to establish Corollary~\ref{cor:equivalence}. 
\end{proof}

\section{Thermodynamics of isometric embedding}
\label{sec:thermo}
\subsection{Isometric embeddings, artificial intelligence and turbulence}
\label{subsec:embed}
The work presented in this paper is part of a program to construct Gibbs measures supported on isometric embeddings. Informally, the underlying questions are: `how do we construct typical isometric embeddings' and `what are their universal properties'?  

This program was first motivated by the embedding-turbulence analogy~\cite{DeS3}. The primary empirical reality in turbulence is the universality of the Kolmogorov spectrum. The Kolmogorov spectrum is a feature of the {\em statistical\/} theory of turbulence. By contrast, despite its origin, the Onsager conjecture (now theorem) has no explicit probabilistic content. Our work began as a randomization of Nash's scheme in order to bridge this divide. In this context, the construction of Gibbs measures allows critical exponents in PDE theory to be seen as the mathematical counterpart of critical exponent phenomena in condensed matter physics~\cite{goldenfeld2018lectures}. The isometric embedding problem also appears in quantum field theory under the guise of the nonlinear sigma model~\cite{Friedan1,Friedan2}. The physical context of these problems provides a rich source of inspiration. For example, the embedding-turbulence analogy allows us to conjecture universality for isometric embeddings and to seek models and numerical methods to test such conjectures. 

The isometric embedding problem also arises in several mathematical approaches to artificial intelligence. Diffusion maps, heat kernel embeddings, and geometric deep learning are successful frameworks in machine learning that implicitly use isometric embedding. This is a vast area; the papers~\cite{Belkin,lecun,Maggioni,Singer-Wu} provide an introduction to some of its scope. A foundational result in the area, due to Berard, Besson and Gallot~\cite{BBG}, is the construction of `almost' isometric embeddings $(M,g) \to L^2(M,g;\R)$ using heat kernels. Unlike Nash's embedding theorems, the heat kernel embeddings are approximate, not exact, and they map $M$ into an infinite-dimensional space, not $\R^q$. However, they are geometrically natural and admit robust numerical implementations. Thus, the use of heat kernel embeddings has stimulated new attempts to determine `canonical' isometric embeddings in $\R^q$~\cite{Portegies,Wang-Zhu}, extensions of the heat kernel method to RCD(K,N) spaces~\cite{Ambrosio,Huang2023isometric}, and computational relaxation schemes~\cite{Mcqueen}. Despite the extensive use of heat kernel embeddings, there appears to have been no attempt to use probabilistic methods, especially stochastic calculus, to rethink Nash's theorems prior to our work. 

Grenander's pattern theory provides a Bayesian framework for many problems in cognition (see~\cite{M-pt,Mumford-Desolneux} for introductions).  However, these Bayesian methods are not competitive with deep learning in computer vision, since they rely on the construction of geometric priors and fast optimization over parameters that determine the priors. Neither of these steps has been adequately resolved. Our construction of Gibbs measures for isometric embeddings is also stimulated by a desire to bridge the divide between Bayesian methods and deep learning.  

The above applications emphasize the need to balance rigorous results with fast algorithms, and to balance concept with technique, when re-examining Nash's work. The main conceptual insight formulated in~\cite{GM-gsi} is that the use of information theory provides unity between the many applications above. In the context of physics, we follow Jaynes and approach Gibbs measures from an information theoretic perspective~\cite{Jaynes}. We also augment the embedding-turbulence analogy by developing an embedding-RMT (for random matrix theory) analogy, so that the ties to mathematical physics are made explicit. The heart of the matter, however, is an information theoretic interpretation of embedding that goes roughly as follows. 

We view embedding as a form of information transfer between a source  and an observer. The process of information transfer is complete when all measurements of distances by the observer agree with those at the source. This viewpoint is both Bayesian and information theoretic. It places the emphasis not on the structure of the manifold, but on a more primitive aspect of the problem, the measurement of length. In the Bayesian interpretation, the world is random and both the source and the observer are stochastic processes with well-defined parameters. The process of successive approximation implict in Nash’s work can now be modeled as an optimal control strategy by an observer tuning a model in response to measurements of signals from the source. Thus, embedding is simply ‘replication’ and the process of replication is complete when all measurements by the observer and the source agree on a common set of questions (here it is the question: ‘what is the distance between points $x$ and $y$\,?’). 

The approach in this paper has been chosen to implement this information theoretic viewpoint in its simplest form.

\subsection{A gedanken experiment for the measurement of distance}
The power of the isometric embedding problem lies in its minimalism and generality. It is minimal because equation~\qref{eq:embed1} captures the equality of infinitesimal lengths in $(M,g)$ and on $u(M) \subset (\R^q,e)$. It is general because Nash's embedding theorems hold for all Riemannian manifolds $(M,g)$ under mild regularity and toplogical assumptions. The construction of Gibbs measures in mathematical physics is typically guided by an underlying dynamical system. Thus, in order to construct Gibbs measures supported on solutions to~\eqref{eq:embed1} it is necessary to use a dynamical system that respects the minimalism of the problem. 

The gedanken experiment that underlies Theorem~\ref{t:renorm} is this: How do we model the measurement of length for an embedding of $M$?  The ideal model must be consistent with the rigorous analysis of equation~\eqref{eq:embed1}, as well as its scientific applications.

Theorem~\ref{t:renorm} is formulated to capture the minimal thermodynamics of embedding in the following sense. It formalizes the idea that $u$ is isometric if and only if intrinsic and extrinsic observers measure exactly the same length between each pair of points. Of course, this is what the PDE~\eqref{eq:embed1} means implicitly. What is new is the idea that the use of Brownian motion on $(M,g)$ provides an explicit model for the measurement of distance by distinct explorers of the space $u(M)$. 

More precisely, the equivalence in law of the two stochastic processes $Y_t$ and $Z_t$, corresponds conceptually to the fact that intrinsic and extrinsic Brownian observers explore the space $\Sigma =u(M)$ in an equivalent manner. The reason we distinguish between $W_t \in \R^n$ and $B_t \in \R^q$ in the SDE's defining $U_t$ and $Z_t$ is that the use of distinct probability spaces to construct Brownian motion formalizes the notion of measurement of distance in different frames of reference. The heat kernel $k_\tau^Y(x,y)$ for the process $Y_t$ provides the best guess of the distance $d_g(x,y)$ between $x$ and $y$ as measured by an intrinsic observer at temperature $\tau$, with $d_g(x,y)^2 \approx -2\tau \log k_\tau^Y(x,y)$. Similarly, the heat kernel $k_\tau^Z$ provides an approximation to measurements by an extrinsic observer.  Thus, these two constructions of Brownian motion, along with the use of Varadhan's lemma, provide a minimal model for the measurement of distances by each observer. 
\subsection{Curvature, RLE and stochastic mechanics} 
Theorem~\ref{t:curvature} may also be interpreted along the lines of Nelson's stochastic mechanics~\cite{Dohrn-Guerra,Nelson}. The correction term $\tfrac{1}{2}H$ in equation~\eqref{eq:curvature} may be interpreted as a constraint that ensures that the process $Z_t$ does not leave $\Sigma$. It is the analog in stochastic mechanics  of the centripetal acceleration in Newtonian mechanics. The interpretation of (minus a half) mean curvature as a constraint force is an analog of Maxwell's derivation of the pressure on the boundary of a domain containing a hard-sphere gas using the time average of momentum transfer during collision. Unlike Maxwell, however, we do not postulate the existence of a hard sphere gas, only the existence of Wiener measure. Theorem~\ref{t:curvature} and (conjectural) equation~(\ref{eq:smg}) provide a rigorous microscopic explanation for the pressure, even in the low-regularity regime. 

An imprecise, but useful, caricature of this effect is as follows: subtract the drift from the Stratonovich equation~\eqref{e:extrinsicBM} to obtain the formal \Ito\/ SDE
\begin{equation}    
\label{eq:ito}
d\tilde{Z}_t = \sum_{a=1}^q P_a\left|_{\tilde Z_t} \right. \circ dB_t^a  - \frac{1}{2}H(\tilde Z_t) \,dt \stackrel{\mathrm{formally}}{=} \sum_{a=1}^q P_a\left|_{\tilde Z_t} \right. dB_t^a .
\end{equation}
The formal \Ito\/ equation is suggestive because it tells us that the fluctuations are
tangential. However, Theorem~\ref{t:curvature} tells us that $\tilde{Z}_t$ does not lie in $u(M)$: it is pushed outwards normally by the  \Ito\/ correction.

We have used this insight to provide a geometric construction of Dyson Brownian motion~\cite{HIM23}, to construct an analog of Dyson Brownian motion in the Siegel half-space~\cite{MY2}, to find Gibbs sampling algorithms for low-rank psd matrices ~\cite{Yu2,Yu1}, and to shed new light on deep learning and optimization~\cite{MY1}. When applied to the embedding problem itself, we see that we may replace Nash's discrete scheme outlined in Section~\ref{subsec:background} by a geometric stochastic flow of a short embedding that uses only tangential noise to `push normally outwards'. The more subtle issue, however, is to determine the most fundamental evolution equation of this nature. Here we may use random matrix theory as a guide.

In each of the above examples, symmetries simplify the analysis, allowing us to formulate a unifying Riemannian Langevin equation (RLE) of the form~\footnote{Equation~\eqref{e:RLE} is formal because, like equation~\eqref{eq:ito}, it is written in the \Ito\/ form, not the Stratonovich form. We have chosen this form so that the correspondence with the (Euclidean) Langevin equation is clear.} 
\begin{align}\label{e:RLE}
    dX_t=\mathrm{grad}_g S(X_t) + dB^{g,\beta}_t\,.
\end{align}
Here $X_t$ is a stochastic process on a Riemannian manifold $(M,g)$, $B^{g,\beta}_t$ is intrinsic Brownian motion on $(M,g)$ at inverse temperature $\beta$, and $S = \log W$ is a Boltzmann entropy such that the `number of microstates' $W$ is the volume of a group orbit. An additional potential may be included so that the entropy $S$ is replaced by minus a free energy; however, it is the understanding of $S$ that is most fundamental in each of these problems.

The stochastic evolution used to establish the existence of Gibbs measures for the isometric embedding problem and turbulence must adhere strictly to thermodynamic principles. The above examples reveal clearly that the RLE for stochastic gradient ascent of entropy is the most fundamental such model. This leads us to seek natural infinite-dimensional group orbits corresponding to the isometric embedding problem, defining their entropy $S$ by replacing volume with a Fredholm determinant. The theory of stochastic flows provides a powerful tool for this purpose. But as in this paper, this  viewpoint leads us back to classical questions in stochastic flows and random matrix theory, allowing us to see them in a new light. 

The fundamental problem within this class is as follows. Given $(M,g)$ our task is to determine the low-regularity renormalization of Brownian motion in the diffeomorphism group $\mathrm{Diff}(M)$, extending work of Baxendale and Kunita~\cite{Baxendale-diff,Kunita}. The renormalization involves resolving a classical degeneracy: there are many Gaussian sections of $T\mathcal{M}$ whose stochastic flows have one-point marginals that are Brownian motion~\cite[Ch.4]{Kunita}. That is, there are {\em many\/} intrinsic constructions of Brownian motion on $(\mathcal{M},g)$. The fundamental new idea that is suggested by the embedding-RMT analogy is that a principled renormalization is {\em not\/} the Eells, Elworthy, Malliavin construction. Instead, we approach Brownian motion in  $\mathrm{Diff}(M)$ using {\em stochastic gradient flows\/} given by Stratonovich SDE
\begin{equation}
\label{eq:renorm-flow}
dX_t = \mathrm{grad}_g (\circ \, d\psi_t),
\end{equation}
for a stationary random field $\psi_t: \mathcal{M}\to \R$. The law of $\psi$ is determined by a {\em stochastic\/} Nash lemma, replacing~\cite[Lemma 1]{Nash1} with an infinite-dimensional matrix-completion problem. 

In this way, we see a subtle persistence of Nash's ideas, even when the isometric embedding problem is studied with entirely probabilistic techniques. These RLE will be explained at greater length in forthcoming work by the authors.

Taken together, our theorems and gedanken experiment, express the idea that the true nature of the isometric embedding problem lies in understanding the implicit role of measurement, and gauge invariance, in the character of physical law.

\bibliographystyle{siam}
\bibliography{im}
\end{document}